\documentclass[final,3p,mathptmx]{elsarticle}

\usepackage{amsmath,amsthm,amssymb,amsfonts}
\usepackage{color}
\usepackage{caption}

\newcommand{\supp}{\mathop{\mathrm{supp}}}
\newcommand{\lspan}{\mathop{\mathrm{span}}}
\newcommand{\diag}{\mathop{\mathrm{diag}}}

\theoremstyle{plain}
\newtheorem{thm}{Theorem}[section]
\newtheorem{lem}[thm]{Lemma}
\newtheorem{prop}[thm]{Proposition}

\newtheorem{conj}[thm]{Conjecture}

\theoremstyle{definition}
\newtheorem{defn}{Definition}[section]

\theoremstyle{remark}
\newtheorem{rem}[thm]{Remark}

\everymath{\displaystyle}

\begin{document}

\title{Semi-regular Dubuc-Deslauriers wavelet tight frames}
\author{Alberto Viscardi}
\ead{alberto.viscardi@unimib.it}
\address{Dipartimento di Matematica e Applicazioni, Universit\`a degli Studi di Milano-Bicocca, via Roberto Cozzi 55, 20126 Milano, Italy}
\date{}

\begin{abstract}
In this paper, we construct wavelet tight frames with $n$ vanishing moments for Dubuc-Deslauriers $2n$-point
semi-regular interpolatory subdivision schemes. Our motivation for this construction is its practical use
for further regularity analysis of wide classes of semi-regular subdivision. Our constructive tools are
local eigenvalue convergence analysis for semi-regular Dubuc-Deslauriers subdivision,
the Unitary Extension Principle and the generalization of the Oblique Extension Principle to the
irregular setting by Chui, He and St\"ockler.  This group of authors derives suitable approximation of the inverse 
Gramian for irregular B-spline 
subdivision. Our main contribution is the derivation of the appropriate 
approximation of the inverse Gramian for the semi-regular Dubuc-Deslauriers scaling functions 
ensuring $n$ vanishing moments of the corresponding framelets. 
\end{abstract}

\begin{keyword}
wavelet \sep tight frame \sep semi-regular \sep subdivision \sep Dubuc-Deslauriers \sep UEP
\end{keyword}

\maketitle

\section{Introduction}

Wavelets, from the seminal works of Meyer \cite{MR1228209} and Daubechies \cite{MR1162107}, and wavelet tight frames, from the works of Ron and Shen \cite{MR1491938,MR1469348} and further contributions in \cite{MR1968118,MR1971300}, have been a productive research area in the last thirty years, both in theory and applications (see e.g. \cite{MR2479996}).
One of the standard starting points in the construction of such function systems are refineable functions arising from subdivision schemes, iterative methods for generating curves and surfaces (see e.g. \cite{MR2683008,MR2415757,MR2723195,Warren:2001:SMG:580358}). In the stationary uniform setting, i.e. when the subdivision rules are shift-invariant and do not change between the iterations, the so called \emph{Unitary Extension Principle} (UEP) \cite{MR1491938,MR1469348} and \emph{Oblique Extension Principle} (OEP) from \cite{MR1968118,MR1971300} are used for constructions of wavelet tight frames with one or more vanishing moments. UEP and OEP are based on Fourier techniques and on factorizations of trigonometric polynomials. A generalization of the UEP procedure for nonstationary uniform schemes, when the subdivision rules can change from one iteration to the other, was presented in \cite{MR2443258}. 
A general setting that also covers the semi-regular case is the one proposed in \cite{MR2082157,MR2110512} where a matrix formulation of the UEP and OEP is given and examples with nonuniform B-spline schemes are shown.

The main contributions of this paper are twofold: we construct wavelet tight frames with $n$ vanishing moments from semi-regular Dubuc-Deslauriers $2n$-point subdivision schemes and, to meet this goal, we also present a detailed convergence analysis of such semi-regular schemes.
The family of semi-regular Dubuc-Deslauriers $2n$-point schemes was defined in \cite{MR1356994}, where, without loss of generality, the initial mesh is of the type 
\begin{equation} \label{eq:t}
\mathbf{t}(k)\;=\;\left\{
\begin{array}{rl}
kh_\ell & \textrm{if } k<0\\ \\
kh_r & \textrm{if } k\geq0
\end{array}
\right.\;,\quad h_\ell,\;h_r\;>\;0\;.
\end{equation}
Our convergence analysis of this family uses the local eigenvalue analysis \cite{MR2415757,Warren:2001:SMG:580358}.
Our construction of the corresponding wavelet tight frames on the regular part of the mesh uses the UEP and is based on the well known \cite{MR1397613} link between Dubuc-Deslauriers and Daubechies scaling functions \cite{MR1162107}. The interpolation and polynomial generation (up to degree $2n-1$) properties of the corresponding subdivision schemes ensure $n$ vanishing moments for the framelets. 
On the irregular part of the mesh, in a neighborhood of $\mathbf{t}(0)$, we apply the matrix factorization technique from \cite{MR2082157,MR2110512}, a generalization of the Oblique Extension Principle (OEP) \cite{MR1968118,MR1971300} to the irregular setting. Similarly to \cite{MR2419706}, instead of factorizing a certain global positive semi-definite matrix, we used the regular framelets to reduce the construction on the irregular part of the mesh to a factorization of a $(12n-9)\times(12n-9)$ matrix. The corresponding vanishing moment recovery matrix is a suitable approximation to the inverse Gramian.
This approximation to the inverse Gramian guarantees $n$ vanishing moments of the irregular framelets. However, the existence of the underlying scaling functions is ensured only for certain values of $h_\ell/h_r$.

The advantage of our construction is in its simplicity. Indeed, with our UEP based construction we obtain regular framelets with $n$ vanishing moments without endeavouring into more tedious computations required by the OEP. Furthermore, compared to the B-spline based wavelet tight frames in \cite{MR1968118} (the only other semi-regular wavelet tight frame in literature) whose filters have size (number of non-zero coefficients) $3n-1$, the corresponding filters obtained from the Dubuc-Deslauriers $2n$-point framelets are of size $2n+1$. The disadvantage occurs on the irregular part of the mesh, where our filters have possibly larger supports.

The paper is organized as follows. Section \ref{sec:notes} presents notation and recalls some background facts about subdivision and wavelet tight frames. In Section \ref{sec:DD} we introduce the family of semi-regular Dubuc-Deslauriers $2n$-point schemes, providing their convergence analysis, and define the corresponding scaling functions. The wavelet tight frame construction for the regular case is presented in Section \ref{sec:UEP}. In Section \ref{sec:semi_reg}, we compute the Gramian and present our semi-regular wavelet tight frame construction. To conclude, in Section \ref{sec:examples}, we illustrate our theoretical results by examples for $n=1,2$.

\section{Notation and Background} \label{sec:notes}

\noindent In this section, we recall some basic facts about subdivision and wavelet tight frames.

\noindent A stationary subdivision scheme with the bi-infinite subdivision matrix $\mathbf{P}: \ell(\mathbb{Z}) \rightarrow \ell(\mathbb{Z})$ 
maps recursively the initial data $\mathbf{f}_0=[\mathbf{f}_0(k)\;:\; k \in \mathbb{Z}]\in\ell(\mathbb{Z})$ parametrized by the starting mesh $\mathbf{t}$ (see \eqref{eq:t}) to 
finer sequences $\mathbf{f}_j=[\mathbf{f}_j(k)\;:\; k \in \mathbb{Z}] \in \ell(\mathbb{Z})$ parametrized by the meshes $2^{-j}\mathbf{t}$, $j \in \mathbb{N}$ by
\begin{equation}\label{eq:sub_step}\mathbf{f}_j\;=\;\mathbf{P}^j\mathbf{f}_0,\quad j\in\mathbb{N}.
\end{equation}

In the regular case, i.e. $\mathbf{t}=\mathbb{Z}$, the subdivision matrix $\mathbf{P}$ is $2$-slanted  with 
\begin{equation}\label{eq:2slant}\mathbf{P}(k,m)\;=\;\mathbf{p}(k-2m),\quad \mathbf{p} \in \ell_0(\mathbf{Z}), \quad k,m\in\mathbb{Z}.
\end{equation}
We make use of the trigonometric polynomial (\emph{subdivision symbol})
\[p(\omega)\;=\;\frac{1}{2}\sum\limits_{k\in\mathbb{Z}}\;\mathbf{p}(k)\;e^{i2k\pi\omega}, \quad \omega \in [0,1),
\]
associated to the \emph{mask} $\mathbf{p} \in \ell_0(\mathbf{Z})$. The \emph{support of the mask $\mathbf{p}$} is the set of indeces
\[\supp(\mathbf{p})\;=\;\left\{\;\min\left\{k\in\mathbb{Z}\;:\;\mathbf{p}(k)\neq0\right\},\;\dots,\;\max\left\{k\in\mathbb{Z}\;:\;
\mathbf{p}(k)\neq0\right\}\;\right\}.\]

The subdivision scheme is said to be \emph{convergent} if, for every $k\in\mathbb{Z}$, there exists $\varphi_k\in C^0(\mathbb{R})$ called \emph{basic limit function} such that 
\begin{equation*}
\lim\limits_{j\rightarrow \infty} \sup_{m \in \mathbb{Z}} | \varphi_k(2^{-j m})- \mathbf{f}_j(m)|=\;0 \quad
\hbox{for} \quad \mathbf{f}_0(m)=\delta_k(m)= \left\{\begin{array}{cc}1, & m=k, \\ 0, & \hbox{otherwise}. \end{array} \right.
\end{equation*}
Equivalently, $\varphi_k$ must be the uniform limit of the piecewise linear functions that interpolate the data $\mathbf{f}_j$ on the mesh $2^{-j}\mathbf{t}$.
A well known necessary condition \cite{MR1079033} for convergence of subdivision states that $1$ is the right eigenvalue of 
$\mathbf{P}$ with algebraic multiplicity one and all other eigenvalues of $\mathbf{P}$ are in the absolute value
less than $1$. In terms of the subdivision symbol, this condition reads as $p(0)=1$.
The associated eigenvector is $\mathbf{1}=[1\;:\; k\in\mathbb{Z}]$. This implies the \emph{partition of unity} property
\begin{equation}\label{eq:POU} [\;\varphi_k\;]_{k\in\mathbb{Z}}^T\;\mathbf{1}\;\equiv\;1\;.\end{equation}
The basic limit functions satisfy the  \emph{refinement equation}
\begin{equation}\label{eq:ref_eq}
  [\varphi_k(x)\;:\;k\in\mathbb{Z}] = \mathbf{P}^T \; \left[\;\varphi_k(2x)\;:\; k\in\mathbb{Z}\right], \quad x \in \mathbb{R}.
\end{equation}
In the regular case, due to \eqref{eq:sub_step} and \eqref{eq:2slant}, we have $\varphi_k\;=\;\varphi_0(\cdot-k)$, $k\in\mathbb{Z}$
and the refinement equation becomes
\begin{equation*}
\widehat{\varphi}_0(\omega)\;=\;p(\omega/2)\;\widehat{\varphi}_0(\omega/2), \quad \omega \in [0,1),
\end{equation*}
with the Fourier transform $\widehat{\varphi}_0$ of $\varphi_0$ defined by
\[\widehat{\varphi}_0(\omega)\;=\;\int_{\mathbb{R}}\;e^{-i2\pi\omega x}\;\varphi_0(x)\;dx\;.\]
As for the semi-regular case, the subdivision matrix $\mathbf{P}$ can differ from the regular one at the columns whose indeces belong to the set
\begin{equation}\label{eq:I_irr_gen}
\mathcal{I}_{irr}\;=\;\left\{\;k\in\mathbb{Z}\;:\;-\max(\supp(\mathbf{p}))<k<-\min(\supp(\mathbf{p}))\;\right\}.
\end{equation}
This implies the loss of shift-invariace, i.e. the basic limit functions
$\{\varphi_k\}_{k\in \mathcal{I}_{irr}}$, which are the ones with $0$ in the interior of their support, are no longer
the shifts of any other basic limit function. Nevertheless, each of the functions $\varphi_k$, $k<\min(\mathcal{I}_{irr})$, and 
$\varphi_k$, $k>\max(\mathcal{I}_{irr})$, are basic limit functions of the corresponding regular subdivision schemes.

\noindent If the matrix 
\begin{equation}\label{eq:D_mat}
 \mathbf{D}\;=\;\diag\left(\;\int_{\mathbb{R}}\; [\varphi_k(x)\ : \ k\in \mathbb{Z}]\;dx\;\right)
\end{equation}
is positive definite we can define the corresponding scaling functions 
\begin{equation}\label{eq:renorm} 
   \Phi=[\phi_k\ : \ k\in \mathbb{Z}]=\mathbf{D}^{-1/2} \; [\varphi_k\ : \ k\in \mathbb{Z}]
\end{equation}
for which, by \eqref{eq:POU},
\[ \Phi(x)^T\;\int_{\mathbb{R}}\;\Phi(y)\;dy\;\equiv\;1\;,\]

\noindent The refinement equation \eqref{eq:ref_eq} holds also for $\Phi$ with a renormalized subdivision matrix 
\begin{equation}\label{eq:ref_eq_renorm}\Phi\;=\;\mathbf{D}^{-1/2}\;\mathbf{P}^T\;\mathbf{D}^{1/2}\;\Phi(2\;\cdot)\;.\end{equation}

\begin{defn} A \emph{semi-regular wavelet tight frame} of $L^2(\mathbb{R})$ is a family $\Phi\cup\{\Psi_j\;:\; j\in\mathbb{N}\}$ of 
$L^2(\mathbb{R})$ functions  such that
\begin{enumerate}
\item $\Phi$ generates a \emph{Multi Resolution Analysis} (MRA), i.e,
\begin{equation} \label{eq:MRA}
\begin{array}{c}
\overline{\lspan\{\phi_k(2^j\cdot)\}_{k\in\mathbb{Z}}}^{L^2}\;\subset\;\overline{\lspan\{\phi_k(2^{j+1}\cdot)\}_{k\in\mathbb{Z}}}^{L^2},\quad \forall j\in\mathbb{Z}, \\ \\
\{0\}\;=\;\bigcap\limits_{j\in\mathbb{Z}}\;\overline{\lspan\{\phi_k(2^j\cdot)\}_{k\in\mathbb{Z}}}^{L^2}, \\ \\
 L^2(\mathbb{R})\;=\;\overline{\bigcup\limits_{j\in\mathbb{Z}}\;\overline{\lspan\{\phi_k(2^j\cdot)\}_{k\in\mathbb{Z}}}^{L^2}}^{L^2},
\end{array}
\end{equation}
\item there exists a bi-infinite matrix $\mathbf{Q}$ such that\\
\begin{equation}\label{eq:framelets}\Psi_{j}\;=\;2^{j/2}\;\mathbf{Q}^T\;\Phi(2^j\cdot),\quad j\in\mathbb{N},\end{equation}
\item the \emph{tight frame} property (decomposition) holds, i.e.
\begin{equation*}
f\;=\;\sum\limits_{k\in\mathbb{Z}}\;\langle f,\phi_k\rangle \phi_k\;+\;\sum\limits_{j\in\mathbb{N}}\;\sum\limits_{k\in\mathbb{Z}}\;\langle f,\psi_{j,k}\rangle\;\psi_{j,k}\;,\quad  f\in L^2(\mathbb{R}).
\end{equation*}
\end{enumerate}
The functions $\phi_k$, $k\in\mathbb{Z}$, are called \emph{scaling functions} and $\psi_{j,k}$, $j\in\mathbb{N},k\in\mathbb{Z}$ are \emph{framelets}.
\end{defn}

\begin{defn} 
We say that a wavelet tight frame $\Phi\cup\{\Psi_j\;:\; j\in\mathbb{N}\}$ has $v\in\mathbb{N}$ vanishing moments if
\[\int_\mathbb{R}\;x^\alpha\;\Psi_j(x)\;dx\;=\;\mathbf{0},\quad \alpha\in\{0,\dots,v-1\}, \quad  j\in\mathbb{N}.\]
\end{defn}

\section{Dubuc-Deslauriers $2n$-point subdivision schemes and their scaling functions} \label{sec:DD}

\noindent The elements of the family of Dubuc-Deslauriers schemes, introduced in \cite{MR982724} in the regular case, are defined
in the semi-regular setting in \cite{MR1356994} as solutions of the following interpolation problems. Let $n\in\mathbb{N}$ and define the mesh $\mathbf{t}$ as in \eqref{eq:t}. We require that the subdivision matrix
$\mathbf{P}$ is interpolatory and maps any data $\pi(\mathbf{t})$, $\pi \in \Pi_{2n-1}$, into
$\pi(\mathbf{t}/2)=\mathbf{P} \pi(\mathbf{t})$ by using linear combinations of the $2n$ neighbouring points, i.e.
\begin{enumerate}
\item $\mathbf{P}(2k,k)=1$, $k\in\mathbb{Z}$,
\item the entries $\mathbf{P}_k=[\mathbf{P}(2k+1,j) \ : \ j=k-n+1, \ldots, k+n]$, $k \in \mathbb{Z}$,
 satisfy
$$ \mathbf{P}_k \ {\scriptsize\begin{bmatrix}
 1 & \mathbf{t}(k-n+1)  & \cdots& \mathbf{t}(k-n+1)^{2n-1}\\
 1 & \mathbf{t}(k-n+2)& \cdots & \mathbf{t}(k-n+2)^{2n-1}   \\
  \vdots& \vdots & \ddots & \vdots \\
  1  & \mathbf{t}(k+n) & \cdots & \mathbf{t}(k+n)^{2n-1}
\end{bmatrix}
=\;\begin{bmatrix}
 1 &
 \frac{\mathbf{t}(2k+1)}{2} &
 \cdots &
 \left(\frac{\mathbf{t}(2k+1)}{2}\right)^{2n-1}
\end{bmatrix}}.
$$
\item all other entries of $\mathbf{P}$ are equal to zero.
\end{enumerate}

\begin{prop}
Let $n\in\mathbb{N}$ and $\mathbf{P}$ be the subdivision matrix constructed in 1.-3. on the mesh $\mathbf{t}$. Then
\begin{enumerate}
\item[(i)] $1$ is a simple eigenvalue of $\mathbf{P}$ associated  to the right eigenvector $\mathbf{1}$ and all other
eigenvalues of $\mathbf{P}$ are less than $1$ in the absolute value;
\item[(ii)] the subdivision scheme with subdivision matrix $\mathbf{P}$ converges.
\end{enumerate}
\end{prop}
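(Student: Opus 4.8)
The plan is to split the two claims and to exploit the fact that, away from a finite central window, $\mathbf{P}$ coincides with the subdivision matrices of the \emph{regular} Dubuc-Deslauriers $2n$-point schemes of spacing $h_\ell$ (for the columns $k<\min(\mathcal{I}_{irr})$) and $h_r$ (for $k>\max(\mathcal{I}_{irr})$), whose convergence and spectral properties are classical through the link to Daubechies scaling functions. All the genuinely new behaviour therefore sits in the finite block indexed near $\mathbf{t}(0)=0$, and I would attack it by a local eigenvalue analysis.

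For the eigenvalue $1$ in (i) the starting point is immediate: conditions 1.--3. were set up precisely so that $\mathbf{P}$ reproduces every $\pi\in\Pi_{2n-1}$, in particular the constant $\pi\equiv1$, so $\mathbf{P}\mathbf{1}=\mathbf{1}$ and $1$ is an eigenvalue with right eigenvector $\mathbf{1}$. To see that it is simple and strictly dominant I would pass to a finite local subdivision matrix: since the mask has support of width $2n$, one can isolate a finite index window $W$ around $0$ which, once the two regular tails are folded back in, is invariant under the even/odd local rules, and whose finite matrix $\mathbf{S}$ carries exactly those eigenvalues of $\mathbf{P}$ not already described by the two regular half-line schemes. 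I would then verify that $\mathbf{S}$ has $1$ as a simple eigenvalue with constant eigenvector and that every other eigenvalue of $\mathbf{S}$, together with the (known) subdominant eigenvalues of the regular schemes, is strictly less than $1$ in modulus.

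For convergence (ii) I would combine the interpolatory structure with reproduction of $\Pi_1\subset\Pi_{2n-1}$. Because $\mathbf{P}$ is interpolatory, the piecewise-linear interpolants $F_j$ of $\mathbf{f}_j=\mathbf{P}^j\mathbf{f}_0$ on $2^{-j}\mathbf{t}$ form the natural approximating sequence, and $F_{j+1}-F_j$ is governed by the second-order divided differences of $\mathbf{f}_j$; reproduction of linears makes these differences vanish on linear data, so the scheme induces a banded \emph{difference operator} $\mathbf{B}$ with $\mathbf{d}_j=\mathbf{B}\,\mathbf{d}_{j-1}$ that again agrees with the regular Dubuc-Deslauriers difference operators on the two tails. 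On those tails $\mathbf{B}$ is contractive (classical), so it remains only to bound a suitable power of the finite central part of $\mathbf{B}$ below $1$, which is exactly the contractivity supplied by the subdominant local eigenvalues of the previous paragraph. Geometric decay of $\|\mathbf{d}_j\|_\infty$ then makes $\{F_j\}$ uniformly Cauchy, producing continuous basic limit functions and hence convergence.

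The main obstacle is the finite transition block: near $\mathbf{t}(0)$ the interpolation stencils straddle the change of spacing from $h_\ell$ to $h_r$, so $\mathbf{S}$ (and the central part of $\mathbf{B}$) is no longer a Toeplitz slice and its eigenvalues are given by no symbol. Showing that the constant mode is the only unimodular eigenvalue, and that the difference part is uniformly (eventually) contracting, is where the real work lies. I would reduce this to an explicit finite eigenvalue and operator-norm estimate for the transition block, using polynomial reproduction to identify its invariant subspaces; and, since these are only the $C^0$ requirements rather than the stronger conditions needed later for positivity of $\mathbf{D}$, I would expect the estimate to hold robustly for all admissible $h_\ell,h_r>0$.
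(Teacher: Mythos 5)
Your high-level structure matches the paper's: reduce the spectral question to a finite section of $\mathbf{P}$ (the regular tails being classical), and derive convergence from the local spectral picture. But the proposal stops exactly where the proof has to start. You correctly identify that "showing that the constant mode is the only unimodular eigenvalue \dots is where the real work lies," and then you only promise "an explicit finite eigenvalue and operator-norm estimate for the transition block, using polynomial reproduction to identify its invariant subspaces." That is not an argument, and as stated it cannot become one for general $n$ and arbitrary $h_\ell,h_r>0$: the transition block is a $(4n-3)\times(4n-3)$ matrix whose entries depend on the ratio $h_\ell/h_r$ in a complicated rational way (see the five irregular columns displayed for $n=2$), so no explicit eigenvalue computation is available; and polynomial reproduction only exhibits the invariant vectors $\mathbf{t}^\alpha$ with eigenvalues $2^{-\alpha}$, $\alpha=0,\dots,2n-1$ --- it says nothing about the rest of the spectrum, which is precisely what must be bounded. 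The paper closes this gap with a structural argument you are missing: if $\widetilde{\mathbf{P}}\mathbf{v}=\lambda\mathbf{v}$ with $\lambda\notin\{0,1\}$, then the interpolation property forces $\mathbf{v}(0)=0$ and $\mathbf{v}(k)=\lambda\mathbf{v}(2k)$ at even indices, while at an odd index $m$ the entry $\mathbf{v}(m)$ is the value at $\mathbf{t}(m)$ of the polynomial $\pi\in\Pi_{2n-1}$ interpolating the stencil data, and the eigenrelation reads $\lambda\,\pi(\mathbf{t}(m))=\pi\left(\mathbf{t}(m)/2\right)$. Iterating and using continuity of $\pi$ at $0$ gives $\lim_{r\to\infty}|\lambda|^r|\pi(\mathbf{t}(m))|=|\pi(0)|=|\mathbf{v}(0)|=0$, which is incompatible with $|\lambda|\ge 1$ unless $\mathbf{v}$ vanishes; simplicity of $1$ then follows by the same device applied to $\widetilde{\mathbf{P}}-\mathbf{1}\boldsymbol{\delta}^T$. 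Without this (or an equivalent) mechanism, your part (i) is an announcement, not a proof, and since your part (ii) explicitly leans on the subdominant eigenvalue bound from part (i), the gap propagates to the convergence claim as well.

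On part (ii) itself, your route differs from the paper's: you propose a divided-difference/contractivity argument, whereas the paper invokes the known reduction (for interpolatory schemes, citing Deslauriers--Dubuc and the semi-regular paper) of convergence to continuity of the basic limit functions at the single point $0$, which then follows directly from $\widetilde{\mathbf{P}}^j\delta_k\to\delta_k(0)\mathbf{1}$, i.e.\ from part (i). Your route could in principle work --- difference-operator techniques for interpolatory schemes on irregular meshes do exist --- but it requires constructing and analyzing a difference scheme across the transition region, which is extra machinery the paper avoids; given that part (i) is already needed in both approaches, the paper's reduction is strictly cheaper. If you repair part (i) with the interpolation argument above, I would recommend also adopting the paper's shortcut for part (ii) rather than building the difference operator.
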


\begin{proof}
Part $(i)$: Let $n \in \mathbb{N}$ and $\mathcal{I}=\{1-2n,\dots,2n-1\}$. By \cite[Theorem 3]{MR1356994}, all non-zero eigenvalues of $\mathbf{P}$ are uniquely determined by the eigenvalues
of its finite section, the square matrix $\widetilde{\mathbf{P}}=(\mathbf{P}(m,k))_{m,k \in \mathcal{I}}$.  By
construction, due to the interpolation property, $\widetilde{\mathbf{P}}\mathbf{1}=\mathbf{1}$. We show next, that $\lambda \in \mathbb{C} 
\setminus\{0,1\}$ with $\widetilde{\mathbf{P}}\mathbf{v}=\lambda \mathbf{v}$, $\mathbf{v} \in \mathbb{C}^{4n-3} \setminus\{\mathbf{0}\}$, must satisfy $|\lambda|<1$.
The proof is by contradiction, we assume that $|\lambda| \ge 1$. Note first that $\widetilde{\mathbf{P}}(0,0)=1$ and by step 3. of the construction above, we get $\mathbf{v}(0)=\lambda\; \mathbf{v}(0)$, thus, $\mathbf{v}(0)=0$. Step 1. of the construction
forces $\mathbf{v}(k)=\lambda \ \mathbf{v}(2k)$ for $k,2k \in \mathcal{I}$. To determine the odd entries of $\mathbf{v}$, let $m \in \mathcal{I}$ be odd and consider the polynomial interpolation problem with the pairwise distinct knots
$\mathbf{t}(j)$ and values $\mathbf{v}(j)$ for $j \in \left\{\frac{m+1}{2}-n, \ldots, n+\frac{m+1}{2} \right\}$.
This interpolation problem possesses a unique solution, possibly complex-valued, interpolation polynomial $\pi \in \Pi_{2n-1}$. Therefore, by the interpolation property of $\mathbf{P}$, we have
$$
 \lambda \ \pi \left(\mathbf{t}(m) \right) \;=\;\lambda \ \mathbf{v}(m)\;=\;\left(\widetilde{\mathbf{P}} \ \mathbf{v}\right)(m) \;=\; \pi \left(\frac{\mathbf{t}(m)}{2} \right).
$$ 
Iterating we obtain
\begin{equation} \label{eq:aux}
 \lim_{r \rightarrow \infty} |\lambda|^r \ |\pi\left( \mathbf{t}(m)\right)|\;=\;\lim_{r \rightarrow \infty} \left| \pi \left(\frac{\mathbf{t}(m)}{2^r} \right) \right|\;=\;|\pi(0)|\;=\;|\mathbf{v}(0)|\;=\;\mathbf{0},
\end{equation}
which leads to a contradiction: for $|\lambda|=1$, we have $\mathbf{v}(m)=\pi(\mathbf{t}(m))=\mathbf{0}$ or, for $|\lambda|>1$,  the identity \eqref{eq:aux} is violated. It is left to show that $\lambda=1$ is simple. The proof is by contradiction. W.l.o.g. we assume that $1$ has an algebraic multiplicity $2$. Note that $\mathbf{\delta}^T \widetilde{\mathbf{P}}=
\mathbf{\delta}^T$, where $\mathbf{\delta}(0)=1$ and its other entries are equal to zero and define  
$
 \mathbf{B}=\widetilde{\mathbf{P}}-\mathbf{1} \mathbf{\delta}^T$. Then $\mathbf{B}$ has a simple eigenvalue $1$ with $\mathbf{B}\;\mathbf{v}=\mathbf{v}$, $\mathbf{v} \not=\mathbf{0}$. By construction $\mathbf{v}(0)=0$, thus $\mathbf{B}\mathbf{v}=\mathbf{\widetilde{P}}\mathbf{v}$. Following
a similar argument as above we arrive at the contradiction $\mathbf{v}=\mathbf{0}$.

\noindent Part $(ii)$: To prove the convergence of the scheme, due to \cite{MR982724,MR1356994}, it suffices to prove the continuity of the basic limit functions in $0$, i.e. 
\begin{equation}\label{eq:aux1}
|\mathbf{f}_{j,k}(0)-\mathbf{f}_{j,k}(1)| \;\underset{j\rightarrow\infty}{\longrightarrow}\;0, \quad \hbox{and} \quad |\mathbf{f}_{j,k}(0)-\mathbf{f}_{j,k}(-1)|\;\underset{j\rightarrow\infty}{\longrightarrow}\;0
\end{equation}
for $\mathbf{f}_{j,k}\;=\;\mathbf{P}^j\;\delta_{k}$, $k\in\mathbb{Z}$. Or, equivalently, we show that
\[
 |\mathbf{f}_{j,k}(0)\;-\;\mathbf{f}_{j,k}(1)|\;=\;|[\;0,\dots,0,\underbrace{1}_{\hbox{$0$-th}},-1,0,\dots,0\;]\;\mathbf{\widetilde{P}}^j\;\delta_{k}| \rightarrow 0
\]
and, similarly, for the other difference in \eqref{eq:aux1}. The claim follows then by part $(i)$, together with steps 1. and 3. of the construction, which imply that $\mathbf{\widetilde{P}}^j\;\delta_{k}\rightarrow\delta_k(0)\;\mathbf{1}$.
\end{proof}

The structure of $\mathbf{P}$ implies that $\supp([\mathbf{P}(m,k)\ :\ m\in\mathbb{Z}])=\{2k-2n+1,\dots,2k+2n-1\}$
and, thus, by \cite{MR2775138}, the corresponding basic limit functions satisfy
\begin{equation}\label{eq:phi_supp}
 \supp(\varphi_k)=[\mathbf{t}(k+1-2n),\mathbf{t}(k+2n-1)], \quad k \in \mathbb{Z}.
\end{equation}
The construction of $\mathbf{P}$ in $1.-3.$ does not only unify regular and the semi-regular cases, but also ensures that
the corresponding subdivision schemes are interpolatory and reproduce polynomials up to degree $2n-1$. In particular,
the convergence and the interpolation property imply that the functions $\{ \varphi_k \ : \  k \in\mathbb{Z}\}$ are linearly independent 
and, thus, the representation
\begin{equation}\label{eq:DD_rep}
  x^\alpha\;=\;\sum\limits_{k\in\mathbb{Z}}\;\mathbf{t}^\alpha(k)\;\varphi_k(x)\;,\quad  x\in\mathbb{R},
	\quad \alpha\in\{0,\dots,2n-1\},
\end{equation}
is unique. The existence of the corresponding scaling functions $\phi_k$ in \eqref{eq:renorm}  depends on the choice of $h_\ell$ and $h_r$ in \eqref{eq:t},
which ensures that the integrals of the basic limit functions appearing in \eqref{eq:D_mat} are positive,
see Section \ref{sec:examples}. Under this assumption, the corresponding scaling functions $\phi_k$ in the column vector $\Phi=[\phi_k\ : \ k\in \mathbb{Z}]$ in \eqref{eq:renorm} inherit the polynomial reproduction property in \eqref{eq:DD_rep} and we have
\begin{equation}\label{eq:DD_rep_renorm}
 x^\alpha\;=\;\Phi^T(x) \;\mathbf{c}_\alpha,\quad \mathbf{c}_\alpha=\mathbf{D}^{1/2}\;\mathbf{t}^\alpha,
 \quad  \alpha\in\{0,\dots,2n-1\} .
\end{equation}
Furthermore, note that, in the semi-regular case,
\eqref{eq:I_irr_gen} becomes
\begin{equation}\label{eq:I_irr}
 \mathcal{I}_{irr}\;=\;\{2-2n,\dots,2n-2\}\quad\textrm{with}\quad|\mathcal{I}_{irr}|\;=\;4n-3,
\end{equation}
and, denoted with $\mathbf{\Phi}_\ell$ and $\mathbf{\Phi}_r$ the vectors of scaling functions on the regular meshes $\mathbf{t}_\ell=h_\ell\mathbb{Z}$ and $\mathbf{t}_r=h_r\mathbb{Z}$ respectively, we have
\begin{equation} \label{def:Phi_l_Phi_r}
\mathbf{I}_\ell\;\Phi\;=\;\mathbf{I}_\ell\;\Phi_\ell\quad\textrm{ and }\quad \mathbf{I}_r\;\Phi\;=\;\mathbf{I}_r\;\Phi_r,
\end{equation}
where
\[\mathbf{I}_\ell(j,k)\;=\;\left\{\begin{array}{cl}
1,&\textrm{if } j=k<2-2n, \\ \\
0,&\textrm{otherwise},
\end{array}\right. 
\quad\textrm{ and }\quad
\mathbf{I}_r(j,k)\;=\;\left\{\begin{array}{cl}
1,& \textrm{if }j=k>2n-2 \\ \\
0,&\textrm{otherwise}.
\end{array}\right. \]

\section{Regular Case : Unitary Extension Principle} \label{sec:UEP}

\noindent In the regular case, our wavelet tight frame construction, see Proposition \ref{cor:regDDframe}, relies on the well known \emph{Unitary Extension Principle} \cite{MR1469348}. Note that the interpolation property of Dubuc-Deslauriers $2n$-point schemes guarantees
$n$ vanishing moments for the corresponding framelets, higher than it is usually expected for a general UEP based construction. 
Thus, in the regular setting, we avoid using the more general and demanding \emph{Oblique Extension Principle} \cite{MR1968118, MR1971300}.
However, in the next section, the Oblique Extension Principle appears, see Remark \ref{rem:time_domain_Chui}, as a special case of the 
irregular framework presented in \cite{MR2110512}.

\begin{thm}[Unitary Extension Principle (UEP), \cite{MR1469348}] \label{thm:UEP}
Let $p(\omega)$ be the symbol of a convergent subdivision scheme. If the trigonometric polynomials
$q_j(\omega)=\frac{1}{2} \displaystyle \sum_{k\in\mathbb{Z}} \mathbf{q}_j(k) \;e^{i2 k \pi\omega}$, $j=1, \ldots, J$,
satisfy
\begin{equation}\label{eq:UEP}
\left\{\begin{array}{rl}
p(\omega)\overline{p(\omega)} \;+\; \sum\limits_{j=1}^{J}\;q_j(\omega)\overline{q_j(\omega)}\;=\;1 \\
p(\omega)\overline{p(\omega-1/2)} \;+\; \sum\limits_{j=1}^{J}\;q_j(\omega)\overline{q_j(\omega-1/2)}\;=\;0 \\
\end{array}\right.,\quad\omega\in[0,1),
\end{equation}
then the bi-infinite matrix $\mathbf{Q}$ with entries $\mathbf{Q}(k,m+j-1)=2^{-1/2} \mathbf{q}_j(k-2m)$,
$k,m\in\mathbb{Z}$, $j=1, \ldots, J$,
defines a wavelet tight frame  in \eqref{eq:framelets}.
\end{thm}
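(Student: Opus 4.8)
The plan is to reduce the statement to the single Parseval (norm) identity
\[
\|f\|_{L^2}^2=\sum_{k\in\mathbb{Z}}|\langle f,\phi_k\rangle|^2+\sum_{j\geq0}\;\sum_{\ell=1}^{J}\;\sum_{k\in\mathbb{Z}}|\langle f,\psi^{(\ell)}_{j,k}\rangle|^2,\qquad f\in L^2(\mathbb{R}),
\]
where $\phi$ is the scaling function with $\widehat\phi(\omega)=p(\omega/2)\widehat\phi(\omega/2)$ and $\widehat\phi(0)=1$ (furnished by convergence of the scheme), $\psi^{(\ell)}$, $\ell=1,\dots,J$, are the framelet generators attached to $q_\ell$ via $\widehat{\psi^{(\ell)}}(\omega)=q_\ell(\omega/2)\widehat\phi(\omega/2)$, and $\phi_k=\phi(\cdot-k)$, $\psi^{(\ell)}_{j,k}=2^{j/2}\psi^{(\ell)}(2^j\cdot-k)$ collectively realise the family $\{\psi_{j,k}\}$ of \eqref{eq:framelets}. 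Since the masks are finite and $\phi$ is compactly supported and bounded, all these systems are Bessel, so the positive self-adjoint frame operator $S$ is bounded and satisfies $\langle Sf,f\rangle$ equal to the right-hand side above; the displayed identity is therefore equivalent to $S=\mathrm{Id}$ and hence to the decomposition formula in the definition. First I would record that \eqref{eq:UEP} is exactly the unitarity relation $M(\omega)M(\omega)^{*}=I_2$ for the modulation matrix
\[
M(\omega)=\begin{bmatrix} p(\omega) & q_1(\omega) & \cdots & q_J(\omega)\\ p(\omega-1/2) & q_1(\omega-1/2) & \cdots & q_J(\omega-1/2)\end{bmatrix},
\]
and reduce the identity to the dense class of $f$ with $\widehat f\in L^\infty$ compactly supported in $\mathbb{R}\setminus\{0\}$.

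Next I would pass to the Fourier side. By Plancherel and Parseval for Fourier series, for any $g\in L^2$,
\[
\sum_{k\in\mathbb{Z}}|\langle f,2^{j/2}g(2^j\cdot-k)\rangle|^2=2^{j}\int_0^1\Big|\sum_{m\in\mathbb{Z}}\widehat f\big(2^{j}(\omega+m)\big)\,\overline{\widehat g(\omega+m)}\Big|^2 d\omega .
\]
Writing $A_j(f)$ for this quantity with $g=\phi$ and $W^{(\ell)}_j(f)$ for it with $g=\psi^{(\ell)}$, one has $A_0(f)=\sum_k|\langle f,\phi_k\rangle|^2$, and the whole problem collapses to the telescoping identity $A_{j+1}(f)=A_{j}(f)+\sum_{\ell=1}^{J}W^{(\ell)}_j(f)$ together with $A_N(f)\to\|f\|^2$.

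The heart of the proof---and the step I expect to be the main obstacle---is the one-step identity at level $j=0$, from which the level-$j$ version follows by the scaling covariance $A_{j+1}(f)=2^{j}A_1(g)$, $A_j(f)=2^jA_0(g)$, $W^{(\ell)}_j(f)=2^jW^{(\ell)}_0(g)$ with $\widehat g=\widehat f(2^j\cdot)$. To prove it I would substitute the refinement relations $\widehat\phi(\omega+m)=p(\tfrac{\omega+m}{2})\widehat\phi(\tfrac{\omega+m}{2})$ and $\widehat{\psi^{(\ell)}}(\omega+m)=q_\ell(\tfrac{\omega+m}{2})\widehat\phi(\tfrac{\omega+m}{2})$ into $A_0+\sum_\ell W^{(\ell)}_0$, then split the index $m$ into even and odd parts $m=2s$ and $m=2s+1$. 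Using $1$-periodicity of $p,q_\ell$, each inner sum factors as $\overline{\alpha(\tfrac\omega2)}\,E(\omega)+\overline{\alpha(\tfrac\omega2+\tfrac12)}\,O(\omega)$ for $\alpha\in\{p,q_1,\dots,q_J\}$, where $E$ and $O$ collect the even and odd contributions. Expanding the squared modulus and summing over $\alpha$, the two equations in \eqref{eq:UEP} evaluated at $\omega/2$ make the diagonal coefficients equal to $1$ and annihilate the two cross terms, leaving exactly $|E(\omega)|^2+|O(\omega)|^2$; a change of variables then identifies $\int_0^1(|E|^2+|O|^2)\,d\omega$ with $A_1(f)$. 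Here both conditions are indispensable, the first normalising the diagonal and the second cancelling the aliasing cross terms.

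Finally I would telescope $A_N(f)=A_0(f)+\sum_{j=0}^{N-1}\sum_\ell W^{(\ell)}_j(f)$ and let $N\to\infty$. Because $\widehat f$ is supported in a compact set away from the origin, for large $N$ at most one index $m$ contributes to the periodisation in $A_N(f)$; the substitution $\xi=2^N(\omega+m)$ turns $A_N(f)$ into $\int|\widehat f(\xi)|^2\,|\widehat\phi(2^{-N}\xi)|^2\,d\xi$, which converges to $\|\widehat f\|_2^2=\|f\|^2$ by dominated convergence since $\widehat\phi$ is continuous with $\widehat\phi(0)=1$. This yields the Parseval identity on the dense class, and the Bessel bounds extend it to all of $L^2(\mathbb{R})$. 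The MRA axioms \eqref{eq:MRA} are inherited from convergence, compact support and the partition of unity \eqref{eq:POU} of the scheme, so $\Phi\cup\{\Psi_j\}$ is a wavelet tight frame.
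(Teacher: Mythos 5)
The paper does not prove this statement at all: Theorem~\ref{thm:UEP} is imported verbatim from Ron and Shen \cite{MR1469348} as a known result, so there is no in-paper proof to compare against. Your argument is the standard, classical proof of the UEP --- reduce the tight-frame (decomposition) property to the Parseval identity on a dense class, prove the one-step telescoping identity $A_{j+1}=A_j+\sum_\ell W_j^{(\ell)}$ by substituting the refinement relations, splitting the periodization into even and odd indices, and invoking the two equations of \eqref{eq:UEP} (the first normalises the diagonal terms, the second kills the aliasing cross terms), then let $N\to\infty$ using $\widehat\phi(0)=1$ and continuity of $\widehat\phi$ --- and the key computations you sketch are correct; this is essentially the elementary proof found in the later UEP literature (the original Ron--Shen proof goes through the dual Gramian fiberization, a different but equivalent route). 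One small point to tighten: the Bessel property of the full multi-scale system $\{\psi_{j,k}^{(\ell)}\}$ does not follow merely from finite masks and compact support (across scales one needs cancellation, i.e. $q_\ell(0)=0$, which does follow from \eqref{eq:UEP} at $\omega=0$ since $p(0)=1$); alternatively, and more cleanly, the Bessel bound $1$ on all of $L^2$ follows a posteriori from your Parseval identity on the dense class by a finite-subset/limsup argument, after which both sides of the identity extend by continuity. With that repair the proof is complete and consistent with the cited source.
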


\begin{rem} We call the trigonometric polynomials in Theorem \ref{thm:UEP} \emph{framelet symbols}.
The number $v \in\mathbb{N}$ of the vanishing moments of the corresponding wavelet tight frame is given by (see e.g. \cite{MR1971300})
\[v=\min\limits_{j\in J}\left\{\; \mu_j \in \mathbb{N} \ : \  q^{(k)}_j(0)=0, \ k=0,\ldots, \mu_j-1 \ \hbox{and} \  q^{(\mu_j)}_j(0)\not =0  \;\right\}.\]
\end{rem}

\noindent Theorem \ref{thm:UEP} holds, in particular, for orthonormal Daubechies wavelets. Indeed, the Daubechies $2n$-tap scheme \cite{MR1162107} has the scaling symbol $d(\omega)$ with coefficients $\mathbf{d}$ supported on $\{1-2n,\dots,0\}$ which satisfies \eqref{eq:UEP} with
\begin{equation}\label{eq:Daub_wav}
 q_d(\omega)=q_1(\omega)\;=\;e^{-i2\pi (2n-1) \omega}\;\overline{d(\omega-1/2)} \quad \hbox{and} \quad v=n, \quad n\in\mathbb{N}.
\end{equation}
Daubechies wavelets are closely connected to the Dubuc-Deslauriers subdivision schemes, see \cite{MR1397613}. Indeed, the
symbol $d(\omega)$ of the Daubechies $2n$-tap scheme satisfies
\begin{equation}\label{eq:Micchelli}
 p(\omega)\;=\;d(\omega)\;\overline{d(\omega)}, \quad \omega \in [0,1),
\end{equation}
where $p(\omega)$ is the symbol of the Dubuc-Deslauriers $2n$-point scheme.
The identity \eqref{eq:Micchelli}, together with Theorem \ref{thm:UEP_mult}, lead to our UEP construction
of Dubuc-Deslauriers wavelet tight frame in Proposition \ref{cor:regDDframe}.

\begin{prop} \label{cor:regDDframe}
Let $n\in\mathbb{N}$, $d$ and $p$ be the symbols of the Daubechies $2n$-tap scheme and the Dubuc-Deslauriers $2n$-point scheme, respectively. Then
\[ q_1(\omega)\;=\;\sqrt{2}\;e^{i2\pi(2n-1)\omega}d(\omega)\;d(\omega-1/2) \quad\textrm{and}\quad q_2(\omega)\;=\;d(\omega-1/2)\;\overline{d(\omega-1/2)}, \quad \omega \in [0,1), \]
define a wavelet tight frame (with $n$ vanishing moments) for $p$.
\end{prop}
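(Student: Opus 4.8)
The plan is to verify that the triple $(p,q_1,q_2)$ satisfies the two identities of \eqref{eq:UEP}, so that Theorem \ref{thm:UEP} produces a wavelet tight frame, and then to read off the number of vanishing moments from the remark following that theorem. The whole argument rests on two inputs. First, the Micchelli identity \eqref{eq:Micchelli} gives $p(\omega)=d(\omega)\overline{d(\omega)}=|d(\omega)|^2$, so in particular $p$ is real and nonnegative and $\overline{p}=p$. Second, I would extract the orthonormality of the Daubechies scheme from \eqref{eq:Daub_wav}: since $(d,q_d)$ satisfies the first line of \eqref{eq:UEP} and $|q_d(\omega)|^2=|d(\omega-1/2)|^2$, I obtain the quadrature mirror relation $|d(\omega)|^2+|d(\omega-1/2)|^2=1$.

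First I would check the first identity of \eqref{eq:UEP}. Substituting the definitions gives $p(\omega)^2+|q_1(\omega)|^2+|q_2(\omega)|^2=|d(\omega)|^4+2|d(\omega)|^2|d(\omega-1/2)|^2+|d(\omega-1/2)|^4=\bigl(|d(\omega)|^2+|d(\omega-1/2)|^2\bigr)^2$, which equals $1$ by the quadrature mirror relation. The second identity is the delicate one. Here I would expand $q_1(\omega)\overline{q_1(\omega-1/2)}$ and $q_2(\omega)\overline{q_2(\omega-1/2)}$, repeatedly using the $1$-periodicity of $d$ (so that $d(\omega-1)=d(\omega)$). The cross term $q_1(\omega)\overline{q_1(\omega-1/2)}$ collects the scalar phase $e^{i\pi(2n-1)}=-1$ coming from the exponential prefactors, so it contributes $-2|d(\omega)|^2|d(\omega-1/2)|^2$, while $p(\omega)\overline{p(\omega-1/2)}$ and $q_2(\omega)\overline{q_2(\omega-1/2)}$ each contribute $+|d(\omega)|^2|d(\omega-1/2)|^2$; the three terms cancel, giving $0$.

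The main obstacle is bookkeeping rather than depth: keeping the phase factors and the $1$-periodic shifts straight so that the sign $e^{i\pi(2n-1)}=-1$ falls out correctly, since it is precisely this sign that drives the cancellation in the second identity. For the vanishing moments, I would use that $q_d$ in \eqref{eq:Daub_wav} has $v=n$, which (the exponential prefactor being nowhere zero) forces $d$ to vanish to exact order $n$ at $\omega=1/2$, equivalently $d(\omega-1/2)$ to vanish to order $n$ at $\omega=0$. Since $d(0)\neq0$ (because $|d(0)|^2=p(0)=1$), the factor $d(\omega-1/2)$ makes $q_1$ vanish to order exactly $n$ at the origin, whereas $q_2=|d(\omega-1/2)|^2$ vanishes there to order $2n$. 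By the remark after Theorem \ref{thm:UEP}, the number of vanishing moments is $v=\min\{n,2n\}=n$, as claimed.
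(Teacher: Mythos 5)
Your proof is correct, but it takes a different route from the paper's. The paper does not verify \eqref{eq:UEP} directly: it first applies the product principle (Theorem \ref{thm:UEP_mult}) to the pair $d$ and $\overline{d}$, each of which satisfies \eqref{eq:UEP} with the single Daubechies framelet \eqref{eq:Daub_wav}, and multiplies the two UEP systems to obtain, via \eqref{eq:Micchelli}, a tight frame for $p=d\overline{d}$ with \emph{three} generators $d\,\overline{q_d}$, $\overline{d}\,q_d$, $q_d\,\overline{q_d}$; it then uses the identities $q_d(\omega)d(\omega-1/2)=\overline{q_d(\omega)}\,\overline{d(\omega-1/2)}$ and periodicity to merge the two cross terms into the single framelet $\sqrt{2}\,d\,\overline{q_d}$, which is where the factor $\sqrt{2}$ in $q_1$ comes from, and it reads off the $n$ vanishing moments from the ``moreover'' clause of Theorem \ref{thm:UEP_mult} as $\min\{v_a,v_p\}=n$. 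You instead verify the two identities of \eqref{eq:UEP} for $(p,q_1,q_2)$ by hand, using only the Micchelli identity $p=|d|^2$ and the quadrature mirror relation $|d(\omega)|^2+|d(\omega-1/2)|^2=1$ (correctly extracted from the first line of \eqref{eq:UEP} for $(d,q_d)$), with the cancellation in the second identity driven by the sign $e^{\pm i\pi(2n-1)}=-1$; your computations check out, including the vanishing-moment count, where counting zero orders at $\omega=0$ (order exactly $n$ for $q_1$ since $d(0)\neq 0$, order $2n$ for $q_2$) is a legitimate replacement for the paper's appeal to Theorem \ref{thm:UEP_mult}. What each approach buys: yours is shorter, self-contained, and invokes only Theorem \ref{thm:UEP} plus convergence of the Dubuc--Deslauriers scheme (established in Section \ref{sec:DD}, which you should cite explicitly since Theorem \ref{thm:UEP} requires it); the paper's is structured so that the same argument generalizes immediately — Theorem \ref{thm:UEP_mult} is reused for B-spline schemes in the subsequent remark — and it explains conceptually where the two framelets $q_1,q_2$ originate, namely as a collapse of the three product framelets.
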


\begin{proof}
Note that the convergence of the subdivision associated to $d(\omega)$ implies the convergence of the subdivision
associated to $\overline{d(\omega)}$. Moreover, by conjugating the second identity in \eqref{eq:UEP}, we see that the UEP identities in \eqref{eq:UEP} are also satisfied by $\overline{d(\omega)}$. Thus, applying Theorem \ref{thm:UEP_mult}
to $d$ and $\overline{d}$, multiplying the resulting identities and using \eqref{eq:Micchelli}, we obtain a wavelet tight frame for $p=d\overline{d}$ with $n$ vanishing moments and consisting of framelets
\[
 d(\omega)\;\overline{q_d(\omega)}, \quad \overline{d(\omega)}\;q_d(\omega),\quad\textrm{and}\quad q_d(\omega)\;\overline{q_d(\omega)}.
\]
To reduce the number of frame generators we take a closer look to the structure of these framelets. Indeed, the UEP identities  \eqref{eq:UEP} and \eqref{eq:Daub_wav} yield
\small
\begin{equation}\label{eq:DDUEP} \begin{array}{rcl}
1&=&\;\left(\;d(\omega)\;\overline{d(\omega)}\;+\;q_d(\omega)\;\overline{q_d(\omega)}\;\right)\;\left(\;\overline{d(\omega)}\;d(\omega)\;+\;\overline{q_d(\omega)}\;q_d(\omega)\;\right) \\ \\
&=& \left(d(\omega)\;\overline{d(\omega)}\right)\;\overline{\left(d(\omega)\;\overline{d(\omega)}\right)}\;+\;\sqrt{2}\;d(\omega)\;\overline{q_d(\omega)}\;\overline{\left(\sqrt{2}\;d(\omega)\;\overline{q_d(\omega)}\right)}\;+\;\left(q_d(\omega)\;\overline{q_d(\omega)}\right)\;\overline{\left(q_d(\omega)\;\overline{q_d(\omega)}\right)},\\ \\ \\
0&=&\;\left(\;d(\omega)\;\overline{d(\omega-1/2)}\;+\;q_d(\omega)\;\overline{q_d(\omega-1/2)}\;\right)\;\left(\;\overline{d(\omega)}\;d(\omega-1/2)\;+\;\overline{q_d(\omega)}\;q_d(\omega-1/2)\;\right) \\ \\
&=& \left(d(\omega)\;\overline{d(\omega)}\right)\;\overline{\left(d(\omega-1/2)\;\overline{d(\omega-1/2)}\right)}\;+\;\left(q_d(\omega)\;\overline{q_d(\omega)}\right)\;\overline{\left(q_d(\omega-1/2)\;\overline{q_d(\omega-1/2)}\right)} \\ \\
& & \qquad+\;d(\omega)\;\overline{q_d(\omega)}\;\overline{\left(d(\omega-1/2)\;\overline{q_d(\omega-1/2)}\right)}\;+\;\overline{d(\omega)}\;q_d(\omega)\;d(\omega-1/2)\;\overline{q_d(\omega-1/2)}, \quad \omega \in [0,1).
\end{array} \end{equation}
\normalsize
From \eqref{eq:Daub_wav}, we have
\[
 q_d(\omega)\;d(\omega-1/2)\;=\;\overline{q_d(\omega)}\;\overline{d(\omega-1/2)}, \quad \omega \in [0,1).
\]
Moreover, the periodicity of the symbols implies
\[
 \overline{d(\omega)}\;\overline{q_d(\omega-1/2)}\;=\;d(\omega)\;q_d(\omega-1/2), \quad \omega \in [0,1).
\]
Next, we rewrite the last term of the second identity in \eqref{eq:DDUEP}
\[\overline{d(\omega)}\;q_d(\omega)\;d(\omega-1/2)\;\overline{q_d(\omega-1/2)}\;=\;d(\omega)\;\overline{q_d(\omega)}\;\overline{\left(d(\omega-1/2)\;\overline{q_d(\omega-1/2)}\right)}, \quad \omega \in [0,1),
\]
obtaining the framelets $q_1(\omega)=\sqrt{2}d(\omega)\overline{q_d(\omega)}$ and
$q_2(\omega)=q_d(\omega)\overline{q_d(\omega)}$. The claim follows by \eqref{eq:Daub_wav}.
\end{proof}

\noindent This result is equivalent to the one in \cite[Section 3.1.2]{MR1754930}. The proof presented here, however, 
can be easily adapted to get the following more general result.

\begin{thm}\label{thm:UEP_mult}
Let $a$ and $p$ be trigonometric polynomials that satisfy \eqref{eq:UEP} with
$b_j$, $j=1, \ldots, J_a$ and $q_j$, $j=1, \ldots, J_p$, respectively.
Then the product $ap$ satisfies \eqref{eq:UEP} with
\[
 \big\{ p\, b_j:  j=1, \ldots,J_a \big\}\  \cup \  \{ a\, q_j \ : \ j=1, \ldots, J_p\}
\;\cup\;\big\{q_j \, b_k \  :\ j=1,\ldots,J_p, \ k=1,\ldots,J_a \}.
\]
Moreover, if the schemes associated to $a$, $p$ and $ap$ are convergent and the corresponding framelets $b_j$, $j=1,\ldots,J_a$ and $q_j$, $j=1, \ldots,J_p$ have $v_a$ and $v_p$ vanishing moments, respectively, then the wavelet tight frame for the
trigonometric polynomial $a p$
has $v=\min\{v_a,v_p\}$ vanishing moments.
\end{thm}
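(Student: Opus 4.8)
The plan is to prove both parts by directly multiplying the two hypothesised UEP identity pairs, exactly in the spirit of the expansion \eqref{eq:DDUEP} used in the proof of Proposition \ref{cor:regDDframe}. The conceptual reason the argument closes is that the proposed generator collection for $ap$ is precisely the set of pointwise products of one entry of the vector $(p,q_1,\dots,q_{J_p})$ with one entry of $(a,b_1,\dots,b_{J_a})$; that is, it is the set of entries of the Kronecker product of the two modulation vectors, with $ap$ itself taking the role of scaling symbol. Since the conditions \eqref{eq:UEP} say exactly that each modulation vector has unit length and is orthogonal to its half-shift, the product identities follow from multiplicativity of these two inner products.

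Concretely, I would record the hypotheses for $a$ as $a\overline{a}+\sum_j b_j\overline{b_j}=1$ and $a(\omega)\overline{a(\omega-1/2)}+\sum_j b_j(\omega)\overline{b_j(\omega-1/2)}=0$, together with the analogous pair for $p,q_k$. For the first identity of \eqref{eq:UEP} for $ap$, I multiply the two ``$=1$'' equations, expand the product of the two sums, and regroup the four resulting families as $(ap)\overline{(ap)}$, $\sum_j (p\,b_j)\overline{(p\,b_j)}$, $\sum_k (a\,q_k)\overline{(a\,q_k)}$ and $\sum_{j,k}(q_k b_j)\overline{(q_k b_j)}$; this is the left-hand side of the first condition for the claimed generator set, and it equals $1\cdot 1=1$. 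For the second identity I multiply the two ``$=0$'' equations: the factorisation $(ap)(\omega)\overline{(ap)(\omega-1/2)}=\big[a(\omega)\overline{a(\omega-1/2)}\big]\big[p(\omega)\overline{p(\omega-1/2)}\big]$ makes every cross term reassemble into the matching generator evaluated at $\omega$ and at $\omega-1/2$, so the expansion is exactly the second condition for the product system and equals $0\cdot 0=0$. Beyond bookkeeping no cancellation is required, so this part is routine; combined with convergence of $ap$, Theorem \ref{thm:UEP} then yields the wavelet tight frame.

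For the vanishing moments I would use the characterisation recalled after Theorem \ref{thm:UEP}: the moment order of a framelet is the order of the zero of its symbol at $\omega=0$. The convergence hypotheses, via the necessary condition $p(0)=1$ from Section \ref{sec:notes}, give $a(0)=p(0)=1$, so neither $a$ nor $p$ vanishes at $0$. Since for trigonometric polynomials the order of the zero at $0$ is additive under multiplication, each $p\,b_j$ has the same order as $b_j$ and each $a\,q_k$ the same order as $q_k$, while each $q_k b_j$ has order equal to the sum, hence $\ge v_a+v_p$. Taking the minimum over all generators therefore gives $\min\{v_a,\,v_p,\,v_a+v_p\}=\min\{v_a,v_p\}$, as claimed.

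I expect no genuine difficulty in the identity part; the only point that is not purely formal lies in the moment count, where convergence must be invoked to guarantee $a(0),p(0)\neq 0$. Without this, multiplying by $a$ or $p$ could raise the order of the zero at $0$ and spoil the value of the minimum; it is precisely $a(0)=p(0)=1$ that keeps the orders of the mixed-with-scaling generators $p\,b_j$ and $a\,q_k$ equal to $v_a$ and $v_p$ and prevents $v_a+v_p$ from ever being the binding term.
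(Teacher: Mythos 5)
Your proof is correct and takes essentially the same route as the paper: the paper gives no standalone proof of Theorem \ref{thm:UEP_mult}, stating instead that the expansion \eqref{eq:DDUEP} in the proof of Proposition \ref{cor:regDDframe} ``can be easily adapted,'' and your multiplication-and-regrouping of the two UEP identity pairs is precisely that adaptation (indeed simpler, since for the full product generator set the four expanded families match term by term with no rearrangement). Your order-of-zero count for the vanishing moments, using $a(0)=p(0)=1$ from convergence so that $\min\{v_a,v_p,v_a+v_p\}=\min\{v_a,v_p\}$, correctly supplies the detail the paper leaves implicit.
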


\begin{rem}
Theorem \ref{thm:UEP_mult} can be used to obtain explicit algebraic expressions for framelets with $1$ vanishing moment
for the family of B-spline schemes, since the B-spline symbols are products of Haar symbols. The same trick as in the proof
of Proposition~\ref{cor:regDDframe}, allows us to reduce the number of framelets to the order of the corresponding B-spline.
This is an alternative, more straightforward way for obtaining the framelets in \cite{MR2419706} in the regular case.
\end{rem}

\section{Semi-regular Case : Matrix Unitary Extension Principle} \label{sec:semi_reg}

\noindent Let $n\in\mathbb{N}$ and $h_\ell,h_r>0$. In this section,  we construct a semi-regular wavelet tight frame from the column vector 
$\Phi=[\phi_k : k\in \mathbb{Z}]$
of scaling functions $\phi_k$ generated by the semi-regular Dubuc-Deslauriers $2n$-point scheme on the mesh $\mathbf{t}$ in \eqref{eq:t}. 
This semi-regular wavelet tight frame has $n$ vanishing moments and its construction is based on the time domain characterization in
\cite{MR2110512}. To be able to apply the result in \cite{MR2110512}, we check  first certain properties of $\Phi$, see subsection
\ref{subsec:properties_DD}. The actual construction of the corresponding wavelet tight frame and its properties are presented in 
subsection \ref{subsec:DDWTF}.

\subsection{Properties of Dubuc-Deslauriers scaling functions $\Phi$} \label{subsec:properties_DD}

\begin{lem}\label{lem:Lemma1}
\begin{enumerate}
\item[$(i)$] $\Phi$ is a \emph{Riesz basis} for $\overline{\lspan(\Phi)}^{L^2}$, i.e. there exist $0<A\leq B< \infty$ such that
\[A\;\|\mathbf{f}\|^2_{\ell^2}\;\leq\;\|\Phi^T\mathbf{f}\;\|^2_{L^2}\;\leq\;B\;\|\mathbf{f}\|^2_{\ell^2},\quad \mathbf{f}\in\ell^2(\mathbb{Z}).\]
\item[$(ii)$] $\Phi$ is \emph{uniformly bounded}, i.e. $\sup\limits_{k\in\mathbb{Z}}\|\phi_k\|_{L^\infty}< \infty$.
\item[$(iii)$] $\Phi$ is \emph{strictly local}, i.e. $\sup\limits_{k\in\mathbb{Z}}|\supp(\phi_k)|< \infty$ and there exists $s \in \mathbb{N}$
 such that, for every index set $\mathcal{I}\subset\mathbb{Z}$ with $|\mathcal{I}|>s$,
\[\bigcap\limits_{k\in\mathbb{\mathcal{I}}}\;\supp(\phi_k)\;=\;\emptyset.\]
\item[$(iv)$] $\lim\limits_{j\rightarrow \infty}\sup\limits_{k\in\mathbb{Z}}|\supp \phi_k(2^j\;\cdot)|=0$. 
\end{enumerate}
\end{lem}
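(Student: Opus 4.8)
My plan is to dispatch $(ii)$, $(iii)$ and $(iv)$ first, since all three follow from the support formula \eqref{eq:phi_supp} together with the fact that $\Phi$ has only finitely many distinct ``shapes''. Because $\mathbf{D}$ in \eqref{eq:D_mat} is diagonal, the renormalization \eqref{eq:renorm} does not alter supports, so $\supp(\phi_k)=[\mathbf{t}(k+1-2n),\mathbf{t}(k+2n-1)]$ and hence $|\supp(\phi_k)|\leq (4n-2)\max(h_\ell,h_r)$. A point $x\in(\mathbf{t}(m),\mathbf{t}(m+1))$ lies in $\supp(\phi_k)$ exactly for $k\in\{m-2n+2,\dots,m+2n-1\}$, giving strict locality $(iii)$ with overlap constant $s=4n-2$; then $(iv)$ is immediate from $|\supp\phi_k(2^j\cdot)|=2^{-j}|\supp\phi_k|$. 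For $(ii)$, the convergence of the scheme established in Section~\ref{sec:DD} makes every basic limit function continuous, while \eqref{def:Phi_l_Phi_r} shows that for $k$ outside a finite neighbourhood of $\mathcal{I}_{irr}$ the $\varphi_k$ are integer translates of the two regular Dubuc-Deslauriers limit functions on $\mathbf{t}_\ell$ and $\mathbf{t}_r$. As only finitely many (irregular) indices remain, and the normalizing integrals $\int_{\mathbb{R}}\varphi_k$ are, under the standing positivity assumption, bounded above and below, $\sup_k\|\phi_k\|_{L^\infty}<\infty$.

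For $(i)$ I would treat the two Riesz bounds separately. The upper bound is routine: by $(iii)$, at every $x$ the sum $\Phi^T\mathbf{f}(x)=\sum_k\mathbf{f}(k)\phi_k(x)$ has at most $s$ nonzero terms, so Cauchy--Schwarz together with the uniform bound $(ii)$ and the uniform support length yields $\|\Phi^T\mathbf{f}\|_{L^2}^2\leq B\,\|\mathbf{f}\|_{\ell^2}^2$ with $B$ depending only on $n$, $h_\ell$ and $h_r$.

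The genuine difficulty is the lower Riesz bound, and here I would argue through the Gramian $\mathbf{G}=(\langle\phi_j,\phi_k\rangle)_{j,k\in\mathbb{Z}}$, which by $(ii)$--$(iii)$ is a bi-infinite, banded, bounded, self-adjoint and nonnegative operator on $\ell^2(\mathbb{Z})$ with $\langle\mathbf{G}\mathbf{f},\mathbf{f}\rangle=\|\Phi^T\mathbf{f}\|_{L^2}^2$. The key structural input is \eqref{def:Phi_l_Phi_r}: as $k\to-\infty$ the entries of $\mathbf{G}$ coincide with the Toeplitz Gramian of the regular system $\Phi_\ell$, and as $k\to+\infty$ with that of $\Phi_r$. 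Both regular systems are Riesz bases, since the regular Dubuc-Deslauriers scaling function is the autocorrelation of the orthonormal Daubechies scaling function (via \eqref{eq:Micchelli} one has $\widehat\varphi=|\widehat\phi_{\mathrm{Daub}}|^2$), whose integer translates are $\ell^2$-stable; hence the two limiting Toeplitz Gramians are bounded below by some $A_0>0$. A \emph{limit-operator} (band-dominated, Fredholm) argument then places the essential spectrum of $\mathbf{G}$ inside $[A_0,\infty)$, so $0$ can only be an isolated eigenvalue of finite multiplicity. Since the global linear independence of $\{\phi_k\}$ recorded after \eqref{eq:phi_supp} forces $\ker\mathbf{G}=\{\mathbf{0}\}$, there is a spectral gap at $0$ and $\mathbf{G}\geq A>0$, which is precisely the lower bound.

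I expect the main obstacle to be exactly this last step: upgrading the stated \emph{finite} linear independence of the $\varphi_k$ to injectivity of $\mathbf{G}$ on all of $\ell^2(\mathbb{Z})$, i.e. genuine $\ell^2$-linear independence. I would secure it by combining the $\ell^2$-independence already available on the two regular tails (from the Riesz basis property of $\Phi_\ell$ and $\Phi_r$) with the local linear independence of the finitely many irregular functions across the transition region, so that any $\mathbf{f}\in\ell^2$ with $\Phi^T\mathbf{f}=0$ must vanish first on both tails and then, by a finite-dimensional argument on the overlap, on $\mathcal{I}_{irr}$ as well.
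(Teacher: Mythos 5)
Your parts $(ii)$--$(iv)$ and your upper Riesz bound coincide, up to cosmetic differences, with the paper's own argument: the paper gets the upper bound by splitting $\Phi^T\mathbf{f}$ into the two regular tails plus the finite irregular block and invoking the regular Riesz constants $B_\ell,B_r$, while you use strict locality, uniform boundedness and Cauchy--Schwarz directly; both are routine. The genuine divergence is the lower bound. The paper disposes of it in one sentence (``the existence of $A>0$ follows from the linear independence of functions in $\Phi$''), whereas you pass to the Gramian $\mathbf{G}$, identify its limit operators at $\pm\infty$ with the Toeplitz Gramians of $\Phi_\ell$ and $\Phi_r$ (bounded below since the regular Dubuc--Deslauriers system is $L^2$-stable, e.g.\ via \eqref{eq:Micchelli}), conclude that the essential spectrum of $\mathbf{G}$ lies in $[A_0,\infty)$, and then use injectivity of $\mathbf{G}$ to rule out $0$ as an isolated eigenvalue. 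This is a genuinely different route and, in fact, a more honest one: in infinite dimensions, linear independence --- even injectivity of $\mathbf{f}\mapsto\Phi^T\mathbf{f}$ on $\ell^2(\mathbb{Z})$ --- does not by itself yield a uniform lower bound, so your essential-spectrum step supplies exactly the compactness argument the paper leaves implicit. Note also that no heavy limit-operator theory is needed here: since $\mathbf{G}$ is banded and agrees with the two Toeplitz Gramians outside a finite block, it is a finite-rank perturbation of a direct sum of compressions of positive definite Toeplitz operators, and Weyl's theorem on the essential spectrum gives the same conclusion.

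One step of yours does need repair: to get $\ker\mathbf{G}=\{\mathbf{0}\}$ you argue that $\Phi^T\mathbf{f}=0$ forces the tail coefficients to vanish ``from the Riesz basis property of $\Phi_\ell$ and $\Phi_r$''. That property controls expansions vanishing on all of $\mathbb{R}$, but here you only know that the left-tail sum vanishes on a left half-line --- elsewhere it may be cancelled by the irregular and right-hand contributions --- so you would need \emph{local} linear independence of the regular system, not just stability. The fix is immediate and bypasses your whole tail-plus-overlap argument: the scheme is interpolatory, so $\varphi_k(\mathbf{t}(m))=\delta_{k,m}$, and $\Phi^T\mathbf{f}$ is continuous (on any compact interval it is a finite sum of continuous functions, by strict locality); hence $\Phi^T\mathbf{f}=0$ in $L^2$ implies it vanishes pointwise, and evaluating at the mesh points gives $\mathbf{f}(m)\,\mathbf{D}(m,m)^{-1/2}=0$ for every $m$, i.e.\ $\mathbf{f}=\mathbf{0}$. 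With that substitution your proof is complete and rigorous.
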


\begin{proof}
$(i)$: Let $B_\ell >0$ and $ B_r >0$ be the Riesz constants for the regular scaling functions $\Phi_\ell$ and 
$\Phi_r$ \eqref{def:Phi_l_Phi_r}, respectively. Then, for every $\mathbf{f}\in\ell_2(\mathbb{Z})$,
$$
\|\Phi^T\mathbf{f}\|^2_{L^2} =  \left\|\;\left(\sum\limits_{k<\min\{\mathcal{I}_{irr}\}}\;+\;\sum\limits_{k\in\mathcal{I}_{irr}}\;+\;\sum\limits_{k>\max\{\mathcal{I}_{irr}\}}\right)\;\mathbf{f}(k)\phi_k\;\right\|^2_{L^2} 
 \leq  \left(B_\ell\;+\;\max\limits_{k\in\mathcal{I}_{irr}}\|\phi_k\|_{L^2}^2\;+\;B_r\right)\;\|\mathbf{f}\|^2_{\ell^2}\;.
$$
The existence of $A>0$ follows from the linear independence of functions in $\Phi$. \\
Part $(ii)$ follows directly, due to the continuity and compact support of the functions in $\Phi$.\\
$(iii)$: For every $k\in\mathbb{Z}$, by \eqref{eq:phi_supp} and by the definition of $\mathbf{t}$, we have
\[\sup\limits_{k\in\mathbb{Z}}|\supp(\phi_k)|\;=\;\max\{h_\ell,h_r\}(4n-2)\;<\;\infty.\] 
Thus, choose $s=4n-2$. \\ 
The claim $(iv)$ follows directly from $(iii)$.
\end{proof}

\noindent Lemma \ref{lem:Lemma2} shows an important property of the bi-infinite \emph{Gramian matrix} (of $\Phi$) 
\begin{equation*}
\mathbf{G}_{\Phi}\;=\;\int_{\mathbb{R}}\;\Phi(x)\;\Phi(x)^T\;dx\;.
\end{equation*}

\begin{lem}\label{lem:Lemma2} For $\mathbf{c}_\alpha= \mathbf{D}^{1/2} \mathbf{t}^{\alpha}$, $\alpha\in\{0,\dots,2n-1\}$,
we have
$$
\mathbf{G}_{\Phi}^{-1}\;\mathbf{m}_\alpha\;=\;\mathbf{c}_\alpha, \quad \alpha\in\{0,\dots,2n-1\},
$$
where $\mathbf{m}_\alpha$ are the \emph{moments} of $\Phi$, i.e.
\begin{equation}\label{eq:moments}
     \mathbf{m}_\alpha\;=\;\int_{\mathbb{R}}\;x^\alpha\;\Phi(x)\;dx.
\end{equation}
\end{lem}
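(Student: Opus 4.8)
The plan is to prove the equivalent identity $\mathbf{m}_\alpha = \mathbf{G}_{\Phi}\,\mathbf{c}_\alpha$ for each $\alpha\in\{0,\dots,2n-1\}$ and then to invert, since by Lemma \ref{lem:Lemma1}$(i)$ the Riesz basis property makes $\mathbf{G}_{\Phi}$ boundedly invertible. The only substantive ingredient I would use is the polynomial reproduction identity \eqref{eq:DD_rep_renorm}, namely $\Phi(x)^T\mathbf{c}_\alpha = x^\alpha$, which already encodes the link between $\mathbf{c}_\alpha$ and the monomials; everything else is bookkeeping about the bi-infinite matrix $\mathbf{G}_{\Phi}$.

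Concretely, I would compute $\mathbf{G}_{\Phi}\,\mathbf{c}_\alpha$ entry by entry. The $k$-th entry is
\[
(\mathbf{G}_{\Phi}\,\mathbf{c}_\alpha)(k)\;=\;\sum_{j\in\mathbb{Z}}\left(\int_{\mathbb{R}}\phi_k(x)\,\phi_j(x)\,dx\right)\mathbf{c}_\alpha(j).
\]
Because $\Phi$ is strictly local (Lemma \ref{lem:Lemma1}$(iii)$) and each $\phi_k$ has compact support, only finitely many $j$ contribute, so the sum is finite and may be interchanged with the integral, turning the right-hand side into $\int_{\mathbb{R}}\phi_k(x)\big(\sum_{j}\phi_j(x)\,\mathbf{c}_\alpha(j)\big)\,dx$. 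For fixed $x$ the inner sum is again finite and equals $\Phi(x)^T\mathbf{c}_\alpha = x^\alpha$ by \eqref{eq:DD_rep_renorm}. Hence the entry collapses to $\int_{\mathbb{R}}x^\alpha\,\phi_k(x)\,dx=\mathbf{m}_\alpha(k)$, which is exactly the $k$-th moment in \eqref{eq:moments}. This establishes $\mathbf{G}_{\Phi}\,\mathbf{c}_\alpha=\mathbf{m}_\alpha$ with no further hypotheses.

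It then remains to pass from $\mathbf{G}_{\Phi}\,\mathbf{c}_\alpha=\mathbf{m}_\alpha$ to $\mathbf{G}_{\Phi}^{-1}\mathbf{m}_\alpha=\mathbf{c}_\alpha$, and I expect this to be the only delicate point. Neither $\mathbf{c}_\alpha=\mathbf{D}^{1/2}\mathbf{t}^\alpha$ nor $\mathbf{m}_\alpha$ lies in $\ell^2(\mathbb{Z})$, since both grow polynomially in $k$, so $\mathbf{G}_{\Phi}^{-1}$ cannot be applied naively as a bounded operator on $\ell^2$. I would resolve this by observing that $\mathbf{G}_{\Phi}$ is banded (again by strict locality, its entries vanish once $|j-k|\ge s$) and boundedly invertible on $\ell^2$ by the Riesz bounds $0<A\le B<\infty$; the standard decay estimates for inverses of banded, boundedly invertible matrices then give that the entries of $\mathbf{G}_{\Phi}^{-1}$ decay exponentially off the diagonal. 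This exponential decay dominates the polynomial growth of $\mathbf{m}_\alpha$, so $\mathbf{G}_{\Phi}^{-1}\mathbf{m}_\alpha$ converges absolutely entrywise and associativity of the bi-infinite products is legitimate; applying $\mathbf{G}_{\Phi}^{-1}$ to the established relation yields $\mathbf{G}_{\Phi}^{-1}\mathbf{m}_\alpha=\mathbf{G}_{\Phi}^{-1}\mathbf{G}_{\Phi}\mathbf{c}_\alpha=\mathbf{c}_\alpha$. Thus the algebraic core is a single substitution from \eqref{eq:DD_rep_renorm}, while the genuine work lies in justifying the inversion of $\mathbf{G}_{\Phi}$ against polynomially growing sequences rather than in any computation.
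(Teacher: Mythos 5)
Your proposal is correct, and its algebraic core is exactly the paper's: both proofs establish $\mathbf{G}_{\Phi}\,\mathbf{c}_\alpha=\mathbf{m}_\alpha$ by substituting the polynomial reproduction identity \eqref{eq:DD_rep_renorm} into $\int_{\mathbb{R}}\Phi(x)\,\Phi(x)^T\mathbf{c}_\alpha\,dx$, after invoking Lemma \ref{lem:Lemma1}$(i)$ (via \cite{MR2110512}) for the invertibility of $\mathbf{G}_{\Phi}$. Where you genuinely diverge is the inversion step. The paper stops at $\mathbf{G}_{\Phi}\mathbf{c}_\alpha=\mathbf{m}_\alpha$ and treats the conclusion as immediate from bounded invertibility on $\ell^2(\mathbb{Z})$; it never addresses the point you single out, namely that $\mathbf{c}_\alpha$ and $\mathbf{m}_\alpha$ grow polynomially in $k$ and therefore lie outside $\ell^2(\mathbb{Z})$, so that the displayed identity $\mathbf{G}_{\Phi}^{-1}\mathbf{m}_\alpha=\mathbf{c}_\alpha$ is, strictly speaking, being read as shorthand for the identity actually proved. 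Your extra argument --- bandedness of $\mathbf{G}_{\Phi}$ from strict locality, Demko-type exponential off-diagonal decay of the inverse of a banded matrix whose spectrum sits in $[A,B]$, and the resulting absolute convergence that licenses associativity of the bi-infinite products against polynomially growing sequences --- supplies a rigorous entrywise meaning for $\mathbf{G}_{\Phi}^{-1}\mathbf{m}_\alpha$ and closes that gap. What the paper's route buys is brevity; what yours buys is that the lemma holds literally as stated, with $\mathbf{G}_{\Phi}^{-1}$ applied as a matrix to $\mathbf{m}_\alpha$, which is the form in which it is actually used later (in the remark following Theorem \ref{thm:semireg_prod_mat}, where $\mathbf{c}_\alpha$ is computed from $\mathbf{m}_\alpha$, and in Section \ref{sec:examples}).
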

\begin{proof}
By \cite{MR2110512}, part $(i)$ of Lemma~\ref{lem:Lemma1} guarantees that $\mathbf{G}_{\Phi}$ is a bounded 
invertible operator on $\ell^2(\mathbb{Z})$ with the bounded inverse.
By \eqref{eq:DD_rep_renorm}, we get
\begin{equation*}
\mathbf{G}_{\Phi}\;\mathbf{c}_\alpha\;=\;\int_\mathbb{R}\;\Phi(x)\;\Phi(x)^T\;\mathbf{c}_\alpha\;dx\;=\;\int_{\mathbb{R}}\;x^\alpha\;\Phi(x)\;dx\;=\;\mathbf{m}_\alpha, \quad \alpha\in\{0,\dots,2n-1\}.
\end{equation*}
\end{proof}

\noindent For computation of the moments in section~\ref{sec:examples}, by \cite{MR2692268}, we first determine $\mathbf{G}_{\Phi}$
by solving the system in \eqref{eq:mat_eq}. 
Theorem \ref{thm:semireg_prod_mat} gives a formal justification for the algorithm presented in \cite{MR2692268} for B-splines. 

\begin{thm} \label{thm:semireg_prod_mat}
Let
\[\mathbf{G}\;=\;\int_{\mathbb{R}}\;[\varphi_k(x)\ :\ k\in\mathbb{Z}]\;[\varphi_k(x)\ :\ k\in\mathbb{Z}]^T\;dx.\]
Assume that $[\mathbf{G}(k,m)]_{k,m\not\in\mathcal{I}_{irr}}$ are given.
The rest of the elements of  $\mathbf{G}$ are uniquely determined from the linear system 
of equations given by
\begin{equation} \label{eq:mat_eq}
\mathbf{G}\;=\;\frac{1}{2}\;\mathbf{P}^T\;\mathbf{G}\;\mathbf{P}.
\end{equation}
Moreover 
\begin{equation} \label{eq:G_rec}
\mathbf{G}_\Phi\;=\;\mathbf{D}^{-1/2}\;\mathbf{G}\;\mathbf{D}^{-1/2},\quad\textrm{where}\quad\mathbf{D}\;=\;\diag(\mathbf{G}\cdot\mathbf{1}).
\end{equation}
\end{thm}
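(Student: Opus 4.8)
The plan is to prove the two displayed identities separately: the matrix equation \eqref{eq:mat_eq} is essentially a restatement of the refinement equation at the level of second-order integrals, whereas the uniqueness claim and the renormalization formula \eqref{eq:G_rec} require the local structure of $\mathbf{P}$ and the partition of unity.

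First I would verify that the genuine Gramian $\mathbf{G}$ solves \eqref{eq:mat_eq}. The refinement equation \eqref{eq:ref_eq} gives $[\varphi_k(x):k\in\mathbb{Z}]=\mathbf{P}^T[\varphi_k(2x):k\in\mathbb{Z}]$, where, by the compact support of the mask, each $\varphi_k(x)=\sum_l\mathbf{P}(l,k)\varphi_l(2x)$ is a finite sum. Inserting this into the definition of $\mathbf{G}$, pulling the constant matrix $\mathbf{P}$ out of the integral and substituting $y=2x$ yields $\mathbf{G}=\frac{1}{2}\mathbf{P}^T\mathbf{G}\mathbf{P}$, i.e. \eqref{eq:mat_eq}; all interchanges of sum and integral are legitimate because the sums are finite. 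I would then record, using \eqref{eq:phi_supp} (equivalently the strict locality in Lemma \ref{lem:Lemma1}$(iii)$), that $\mathbf{G}$ is a banded bi-infinite matrix, so that the entries not prescribed in the hypothesis---those $(k,m)$ with $k\in\mathcal{I}_{irr}$ or $m\in\mathcal{I}_{irr}$ lying inside the band---form a finite set. Hence \eqref{eq:mat_eq} is a finite linear system in finitely many unknowns, whose solvability is guaranteed by the true Gramian.

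The core of the argument, and the step I expect to be the main obstacle, is the uniqueness of this solution. Let $\mathbf{H}$ be the difference of two banded solutions that share the prescribed regular entries: then $\mathbf{H}$ is finitely supported, vanishes on $\{(k,m):k,m\notin\mathcal{I}_{irr}\}$, and satisfies the homogeneous equation $\mathbf{H}=\frac{1}{2}\mathbf{P}^T\mathbf{H}\mathbf{P}$. Iterating this relation $r$ times gives $\mathbf{H}=2^{-r}(\mathbf{P}^T)^r\mathbf{H}\mathbf{P}^r$, so that for each fixed pair $(k,m)$
\[\mathbf{H}(k,m)=2^{-r}\sum_{l,l'}(\mathbf{P}^r)(l,k)\,\mathbf{H}(l,l')\,(\mathbf{P}^r)(l',m),\]
where the sum ranges over the finite support of $\mathbf{H}$. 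The convergence of the scheme forces the columns $\mathbf{P}^r\delta_k=\mathbf{f}_{r,k}$ to converge to the bounded basic limit function $\varphi_k$, hence $\sup_r|(\mathbf{P}^r)(l,k)|<\infty$ for every fixed $l,k$. Since the number of terms in the sum is fixed, the right-hand side is $O(2^{-r})\to0$, whence $\mathbf{H}(k,m)=0$ for all $(k,m)$ and the solution of \eqref{eq:mat_eq} with the prescribed regular block is unique. The delicate points here are the correct bookkeeping of which entries are unknown (so that $\mathbf{H}$ is genuinely finitely supported) and the uniform boundedness of the iterated masks, which is precisely where the convergence analysis of the scheme is used.

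Finally, for \eqref{eq:G_rec} I would invoke the partition of unity \eqref{eq:POU}: since $[\varphi_k(x):k\in\mathbb{Z}]^T\mathbf{1}\equiv1$,
\[\mathbf{G}\,\mathbf{1}=\int_\mathbb{R}[\varphi_k(x):k\in\mathbb{Z}]\,\big([\varphi_k(x):k\in\mathbb{Z}]^T\mathbf{1}\big)\,dx=\int_\mathbb{R}[\varphi_k(x):k\in\mathbb{Z}]\,dx,\]
whose diagonal is exactly $\mathbf{D}$ from \eqref{eq:D_mat}; thus $\mathbf{D}=\diag(\mathbf{G}\cdot\mathbf{1})$. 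Substituting the normalization \eqref{eq:renorm}, $\Phi=\mathbf{D}^{-1/2}[\varphi_k:k\in\mathbb{Z}]$, into $\mathbf{G}_\Phi=\int_\mathbb{R}\Phi\,\Phi^T\,dx$ and pulling the constant matrix $\mathbf{D}^{-1/2}$ outside the integral gives $\mathbf{G}_\Phi=\mathbf{D}^{-1/2}\mathbf{G}\,\mathbf{D}^{-1/2}$, which completes the proof.
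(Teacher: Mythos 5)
Your proposal is correct, and its skeleton matches the paper's: verify that the true Gramian solves \eqref{eq:mat_eq} via the refinement equation and the substitution $y=2x$; show the homogeneous equation $\mathbf{H}=\frac{1}{2}\,\mathbf{P}^T\mathbf{H}\mathbf{P}$ has only the trivial finitely supported solution by iterating and letting the factor $2^{-r}$ beat bounded terms; and deduce \eqref{eq:G_rec} from \eqref{eq:renorm} and the partition of unity \eqref{eq:POU} (the paper calls this last step ``direct''; you spell it out, correctly). The genuine divergence is in how you control the iterates in the uniqueness step. The paper compresses the problem to a finite section $\widetilde{\mathbf{P}}$ of $\mathbf{P}$, indexed by the convex hull of the supports of $\mathbf{H}$ and of the diagonal of $\mathbf{P}$, asserts that this finite section inherits the right eigenvalues of $\mathbf{P}$, and uses the simple dominant eigenvalue $1$ to get $\sup_j \| \mathbf{e}_k^T (\widetilde{\mathbf{P}}^T)^j \|_2 \le C$, whence $|\widetilde{\mathbf{H}}(k,m)| \le 2^{-j} C^2 \|\widetilde{\mathbf{H}}\|_2 \to 0$. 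You never form a finite section: since $\mathbf{H}$ is finitely supported, each entry of $2^{-r}(\mathbf{P}^T)^r\mathbf{H}\,\mathbf{P}^r$ is a finite sum, and you bound $(\mathbf{P}^r)(l,k)=\mathbf{f}_{r,k}(l)$ uniformly in $r$ by the uniform convergence of the subdivision data to the bounded, compactly supported basic limit functions. In effect, where the paper invokes the eigenvalue analysis (part $(i)$ of the convergence proposition of Section 3), you invoke convergence itself (part $(ii)$); both are established in the paper, and your route sidesteps the paper's somewhat delicate claims that the finite section has the same eigenvalues as $\mathbf{P}$ and that this yields bounded powers. One bookkeeping point both arguments share: uniqueness is proved among solutions differing from $\mathbf{G}$ in only finitely many entries; you make this explicit by restricting to banded solutions, which is exactly the paper's implicit assumption when it places all of $\mathbf{H}$ in the finite block $5-6n\le i,j\le 6n-5$.
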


\begin{proof}
Observe first that $\mathbf{G}$ satisfies \eqref{eq:mat_eq}. Indeed, using the refinement equation \eqref{eq:ref_eq} and the substitution $y=2x$, we have
\[\begin{array}{rcl}
 \mathbf{G}&=&\int_\mathbb{R}\;[\varphi_k(x)\ :\ k\in\mathbb{Z}]\;[\varphi_k(x)\ :\ k\in\mathbb{Z}]^T\;dx\\ \\
&=&\mathbf{P}^T\;\int_\mathbb{R}\;[\varphi_k(2x)\ :\ k\in\mathbb{Z}]\;[\varphi_k(2x)\ :\ k\in\mathbb{Z}]^T\;dx\;\mathbf{P}\;=\;\frac{1}{2}\;\mathbf{P}^T\;\mathbf{G}\;\mathbf{P}.
\end{array}\]
Suppose next that there exist another solution to \eqref{eq:mat_eq}, i.e. another bi-infinite matrix  $\widetilde{\mathbf{G}}$. 
Then $\mathbf{H}\;= \mathbf{G}- \widetilde{\mathbf{G}}$ is non-zero and, by definition of ${\cal I}_{irr}$, has all its non-zero entries in the block
$ (\mathbf{H}_{i,j})_{5-6n \le i,j \le 6n-5}$. By linearity, $\mathbf{H}$ also satisfies \eqref{eq:mat_eq}. Define
the index sets
\begin{eqnarray*}
 \mathcal{I}_{\mathbf{H}}=\{\;k\in\mathbb{Z}\;:\;\exists m\in\mathbb{Z}\;\textrm{ with }\;\mathbf{H}(m,k)\neq0 \; \textrm{ or } \; \mathbf{H}(k,m)\neq0\;\}, \quad
 \mathcal{I}_{\mathbf{P}}=\{\;k\in\mathbb{Z}\;:\;\mathbf{P}(k,k)\neq 0\;\},
\end{eqnarray*}
and the set $\mathcal{I}$ as the intersection of $\mathbb{Z}$ with the convex hull of 
$\mathcal{I}_{\mathbf{H}}\cup\mathcal{I}_{\mathbf{P}}$. Furthermore, define the square matrices
$
 \widetilde{\mathbf{H}}=\left[\mathbf{H}(i,j)\right]_{i,j \in \mathcal{I}} \quad
 \hbox{and} \quad \widetilde{\mathbf{P}}=\left[\mathbf{P}(i,j)\right]_{i,j \in \mathcal{I}}$.
We obtain an equivalent finite version of \eqref{eq:mat_eq}
\begin{equation}\label{eq:mat_eq_cut} 
 \widetilde{\mathbf{H}}\;=\;\frac{1}{2}\;\widetilde{\mathbf{P}}^T\;\widetilde{\mathbf{H}}\;\widetilde{\mathbf{P}}.
\end{equation}
Due to  $\widetilde{\mathbf{H}}\not\equiv\mathbf{0}$, there exist $k,m \in \mathbb{N}$ such that $\widetilde{\mathbf{H}}(k,m) \neq 0$. Consider the canonical unit vectors $\mathbf{e}_k$ and $\mathbf{e}_m$ of $\mathbb{R}^{|\mathcal{I}|}$. Iterating \eqref{eq:mat_eq_cut}, we have 
\[0\;\neq\;\widetilde{\mathbf{H}}(k,m) \;=\;\mathbf{e}_k^T\;\widetilde{\mathbf{H}}\;\mathbf{e}_m\;=\;\frac{1}{2^j}\;\mathbf{e}_k^T\;(\mathbf{\widetilde{P}}^T)^j\;\widetilde{\mathbf{H}}\;(\mathbf{\widetilde{P}})^j\;\mathbf{e}_m, \quad j \in \mathbb{N}.
\]
Since $\widetilde{\mathbf{P}}$ is constructed to contain all the corresponding non-zero elements in the rows of $\mathbf{P}$, which is possible due to its 2-slanted nature, these two matrices have the same right eigenvalues. Since $\mathbf{P}$ has largest eigenvalue $1$ in absolute value with multiplicity one (see e.g. \cite{MR1356994}), $\|\mathbf{e}_k^T\;(\widetilde{\mathbf{P}}^T)^j\|_2  \leq C < \infty$ for all $j\in\mathbb{N}$ and, therefore,
\[0\;<\;|\;\mathbf{e}_k^T\;\widetilde{\mathbf{H}}\;\mathbf{e}_m\;|\;\leq\;\left|\;\frac{1}{2^j}\;\mathbf{e}_k^T\;(\widetilde{\mathbf{P}}^T)^j\;\widetilde{\mathbf{H}}\;(\widetilde{\mathbf{P}})^j\;\mathbf{e}_m\;\right|\;\leq\;\frac{1}{2^j}\;C^2\; \|\widetilde{\mathbf{H}}\|_2 \;\underset{j \rightarrow +\infty}{\longrightarrow}\;0,\]
which leads to a contradiction for the non-uniqueness of $\mathbf{G}$. Identity \eqref{eq:G_rec} follows directly from \eqref{eq:renorm} and \eqref{eq:POU}.
\end{proof}

\begin{rem}
$(i)$ The assumption about $[\mathbf{G}(k,m)]_{k,m\not\in\mathcal{I}_{irr}}$ is not restrictive. For the computation of such scalar products one can use \cite{MR1211402,MR1302255}, with the correct necessary renormalization on $h_\ell\mathbb{Z}$ and $h_r\mathbb{Z}$. \\
$(ii)$ Thanks to Theorem \ref{thm:semireg_prod_mat} and Lemma \ref{lem:Lemma2} we are able to compute all the vectors $\mathbf{c}_\alpha$ and $\mathbf{m}_\alpha$, for every $\alpha\in\{0,\dots,2n-1\}$.\\
$(iii)$ The result of Theorem \ref{thm:semireg_prod_mat} can be generalized to the case of two different semi-regular subdivision with subdivision matrices $\mathbf{A}$, $\mathbf{B}$ and basic limit functions $[\alpha_k \ :\ k\in\mathbb{Z}]$, $[\beta_k \ :\ k\in\mathbb{Z}]$ respectively, to compute $\int_\mathbb{R}[\alpha_k(x) \ :\ k\in\mathbb{Z}] [\beta_k(x) \ :\ k\in\mathbb{Z}]^Tdx$.
This is a crucial tool for applications. 
\end{rem}

\subsection{Semi-regular Dubuc-Deslauriers wavelet tight frame} \label{subsec:DDWTF}

\noindent We start our construction of semi-regular Dubuc-Deslauriers ($2n$-point) wavelet tight frames by building a bi-infinite matrix $\mathbf{S}$, see Algorithm~1. In Proposition~\ref{prop:properties_S} and Conjecture~\ref{conjecture},  we list and analyze the properties of 
$\mathbf{S}$ that, by \cite[Theorem 4.2]{MR2110512}, guarantee the wavelet tight frame property of the families in \eqref{def:Psi}.
Proposition~\ref{prop:eigenvector} ensures that the wavelet tight frame in \eqref{def:Psi} has $n$ vanishing moments.

\vspace{0.2cm}{\bf Algorithm 1:}
\begin{enumerate}
\item Define the $(4n-3)\times n$ matrix $\mathbf{C}\;=\;\begin{bmatrix} [\mathbf{c}_0(k)]_{k\in\{2-2n,\dots,2n-2\}}&|&\dots&|&[\mathbf{c}_{n-1}(k)]_{k\in\{2-2n,\dots,2n-2\}}\end{bmatrix}$,
where the bi-infinite column vectors $\mathbf{c}_\alpha$, $\alpha=0, \ldots, n-1$, are defined in \eqref{eq:DD_rep_renorm};
\item Compute the QR factorization $\mathbf{C}=\mathbf{O}\mathbf{U}$
with orthogonal $\mathbf{O}\in\mathbb{R}^{(4n-3)\times (4n-3)}$ and upper triangular $\mathbf{U}\in\mathbb{R}^{(4n-3)\times n}$;
\item Define $\mathbf{S}=\mathbf{I}$ and, if $h_\ell\neq h_r$, modify
\begin{equation}\label{eq:our_S} 
  \mathbf{S}_{irr}\;:=\;[\mathbf{S}(k,m)]_{k,m\in\mathcal{I}_{irr}}\;=\mathbf{\widetilde{O}}\;\mathbf{\widetilde{O}}^T, \qquad \mathbf{\widetilde{O}}=\left[\;[\mathbf{O}(k,1)]_{k\in\mathcal{I}_{irr}}\;|\;\dots\;|\;[\mathbf{O}(k,n)]_{k\in\mathcal{I}_{irr}}\;\right].
\end{equation}
\end{enumerate}

\noindent The matrix $\mathbf{S}$ from Algorithm 1 defines a kernel $\Phi(x)^T\mathbf{S}\Phi(y)$ with desired approximation properties.

\begin{prop} \label{prop:properties_S}  Let  $\mathbf{S}$ be defined by Algorithm~1. Then, for all $f\in L^2(\mathbb{R})$,
\begin{enumerate}
\item[$(i)$] $ \displaystyle \exists C>0\;:\; \int_{\mathbb{R}^2}\;f(x)\;\Phi(x)^T\;\mathbf{S}\;\Phi(y)\;f(y)\;dx\;dy\;\leq\;C\;\|f\|^2_{L^2}$,
\item[$(ii)$] $\displaystyle 2^j\;\int_{\mathbb{R}^2}\;f(x)\;\Phi(2^jx)^T\;\mathbf{S}\;\Phi(2^jy)\;f(y)\;dx\;dy\;\longrightarrow\;\left\{\begin{array}{rl} 0, & \textrm{if} \; j\rightarrow-\infty, \\
\|f\|^2_{L^2}, & \textrm{if} \;  j\rightarrow+\infty.
\end{array}\right. $
\end{enumerate}
\end{prop}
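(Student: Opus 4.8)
The plan is to reduce both statements to estimates on the coefficient sequence $\mathbf b_j:=\big[\langle f,\phi_k^{(j)}\rangle\big]_{k\in\mathbb Z}$, where $\phi_k^{(j)}:=2^{j/2}\phi_k(2^j\cdot)$ are the $L^2$-normalised dyadic dilates. By Fubini and the symmetry of $\mathbf S$, the double integral in $(ii)$ equals the quadratic form $\mathbf b_j^T\mathbf S\,\mathbf b_j$, while the integral in $(i)$ equals $\mathbf a^T\mathbf S\,\mathbf a$ with $\mathbf a=\mathbf b_0$. First I would record the structure of $\mathbf S$ from Algorithm~1: off the fixed block $\mathcal I_{irr}$ (of size $4n-3$, see \eqref{eq:I_irr}) one has $\mathbf S=\mathbf I$, while on $\mathcal I_{irr}\times\mathcal I_{irr}$ it equals $\mathbf S_{irr}=\widetilde{\mathbf O}\widetilde{\mathbf O}^T$ in \eqref{eq:our_S}. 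As $\widetilde{\mathbf O}$ consists of the first $n$ columns of the orthogonal factor $\mathbf O$, it has orthonormal columns, so $\mathbf S_{irr}$ is an orthogonal projection and $\mathbf S$ is block-diagonal, self-adjoint, idempotent, with $0\le\mathbf S\le\mathbf I$ on $\ell^2(\mathbb Z)$ and $\mathbf R:=\mathbf I-\mathbf S$ a finite-rank projection supported on $\mathcal I_{irr}\times\mathcal I_{irr}$. Property $(i)$ is then immediate: $\mathbf a^T\mathbf S\,\mathbf a=\|\mathbf S\mathbf a\|_{\ell^2}^2\le\|\mathbf a\|_{\ell^2}^2\le B\,\|f\|_{L^2}^2$, the last step being the upper Riesz inequality of Lemma~\ref{lem:Lemma1}$(i)$; thus $C=B$ works.

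For $(ii)$ I would write $I_j:=\mathbf b_j^T\mathbf S\,\mathbf b_j=\|\mathbf b_j\|_{\ell^2}^2-\langle\mathbf R\,\mathbf b_j,\mathbf b_j\rangle$ and handle the two summands separately. Since $\mathbf R$ lives on the fixed finite block, the correction only involves the coefficients $\langle f,\phi_k^{(j)}\rangle$ with $k\in\mathcal I_{irr}$. Each such $\phi_k$ has $0$ in the interior of its support, so as $j\to+\infty$ the dilates $\phi_k^{(j)}$ concentrate at the origin and converge weakly to $0$ in $L^2$; hence $\langle f,\phi_k^{(j)}\rangle\to0$ for each of these finitely many $k$, and since $\mathbf R$ is a projection supported on this block, $\langle\mathbf R\,\mathbf b_j,\mathbf b_j\rangle\le\sum_{k\in\mathcal I_{irr}}|\langle f,\phi_k^{(j)}\rangle|^2\to0$. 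It therefore remains to prove the approximate-identity limit $\|\mathbf b_j\|_{\ell^2}^2\to\|f\|_{L^2}^2$ as $j\to+\infty$.

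This limit is the crux and I expect it to be the main obstacle. The plan is a quasi-interpolation/Riemann-sum argument built on the reproduction of constants: taking $\alpha=0$ in \eqref{eq:DD_rep_renorm} gives $\sum_k(\int\phi_k)\phi_k\equiv1$, and by \eqref{eq:renorm} and \eqref{eq:D_mat} one has $\int\phi_k=\mathbf D_{kk}^{1/2}$, where $\mathbf D_{kk}$ equals the local mesh spacing ($h_\ell$ or $h_r$, see \eqref{eq:t}) away from the origin. For $f\in C_c(\mathbb R)$, the uniform support and $L^\infty$ bounds of Lemma~\ref{lem:Lemma1}$(ii),(iii)$ and the uniform continuity of $f$ yield the leading-order estimate $\langle f,\phi_k^{(j)}\rangle=2^{-j/2}f(2^{-j}\mathbf t(k))\,\mathbf D_{kk}^{1/2}+o(2^{-j/2})$, whence $\|\mathbf b_j\|_{\ell^2}^2=\sum_k 2^{-j}|f(2^{-j}\mathbf t(k))|^2\,\mathbf D_{kk}+o(1)$. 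This is a Riemann sum for $\int_{\mathbb R}|f|^2$ over the mesh $2^{-j}\mathbf t$ with node weights equal to the spacings, so it converges to $\|f\|_{L^2}^2$; I would then pass to all $f\in L^2(\mathbb R)$ by density using the uniform bound $\|\mathbf b_j\|_{\ell^2}^2\le B\,\|f\|_{L^2}^2$.

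Finally, for $j\to-\infty$ I would deduce $I_j\to0$ from $0\le I_j\le\|\mathbf b_j\|_{\ell^2}^2$ (valid since $\mathbf R\ge0$) together with $\|\mathbf b_j\|_{\ell^2}^2\to0$. Here the dilates spread out: for $f$ supported in $[-M,M]$, the support formula \eqref{eq:phi_supp} shows that $\langle f,\phi_k^{(j)}\rangle\neq0$ only for the fixed finite set of indices with $0\in\supp(\phi_k)$, once $2^jM$ is smaller than the distance from the origin to the remaining supports; and for each such $k$ the substitution $x=2^{-j}u$ gives $|\langle f,\phi_k^{(j)}\rangle|\le 2^{j/2}\,\|\phi_k\|_{L^\infty}\,\|f\|_{L^1}\to0$. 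A finite sum of vanishing terms yields $\|\mathbf b_j\|_{\ell^2}^2\to0$ on the dense set $C_c(\mathbb R)$, and the uniform Riesz bound again extends this to all of $L^2(\mathbb R)$, completing the proof.
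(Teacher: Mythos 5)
Your proposal is correct, but it takes a genuinely different route from the paper's proof. The paper treats $\mathbf{S}$ essentially as a black-box perturbation of the identity: for part $(i)$ it invokes the regular-case characterization of \cite{MR2110512} for $\Phi_\ell$ and $\Phi_r$ and bounds the finite irregular block by Cauchy--Schwarz; for part $(ii)$ with $j\rightarrow+\infty$ it again cites the regular theory for the terms $\gamma_\ell,\gamma_r$ and shows $\gamma_{irr}\rightarrow0$ because the irregular supports shrink; and for $j\rightarrow-\infty$ it uses the MRA property \eqref{eq:MRA} together with continuity of the inner product. You instead exploit a structural fact the paper never uses: since $\widetilde{\mathbf{O}}$ has orthonormal columns, $\mathbf{S}_{irr}$, and hence $\mathbf{S}$, is an orthogonal projection on $\ell^2(\mathbb{Z})$, so $0\leq\mathbf{S}\leq\mathbf{I}$. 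This makes $(i)$ immediate with the clean constant $C=B$ (the paper's constant is cruder), reduces the $j\rightarrow+\infty$ limit to the approximate-identity statement $\|\mathbf{b}_j\|^2_{\ell^2}\rightarrow\|f\|^2_{L^2}$ for $\mathbf{b}_j=\bigl[\langle f,2^{j/2}\phi_k(2^j\cdot)\rangle\bigr]_{k\in\mathbb{Z}}$ plus a finite-rank correction killed by weak convergence of the concentrating dilates, and lets you dispose of $j\rightarrow-\infty$ by elementary support counting on a dense class, without ever invoking the MRA. The price is that you must reprove the regular approximate-identity property from scratch (your Riemann-sum/quasi-interpolation argument), which the paper simply cites. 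Your argument works, but to make it rigorous you should: (a) justify the Fubini exchanges via absolute summability, which follows from strict locality, Lemma \ref{lem:Lemma1}$(iii)$; (b) record that in the regular regions $\mathbf{D}(k,k)$ equals the local mesh spacing $h_\ell$ or $h_r$ (a consequence of partition of unity and shift-invariance of the regular Dubuc-Deslauriers limit functions), and that the number of indices contributing to the Riemann sum is $O(2^j)$, so that your uniform $o(2^{-j/2})$ errors are summable after squaring; and (c) note that the Riesz bound $\|\mathbf{b}_j\|_{\ell^2}\leq\sqrt{B}\|f\|_{L^2}$ holds uniformly in $j$ by a change of variables, which is what powers your density arguments. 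These are fillable details, not gaps.
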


\begin{proof} Part $(i)$: Recall that the regular elements in $\Phi$ are the elements of $\Phi_\ell$ and $\Phi_r$ in \eqref{def:Phi_l_Phi_r}. 
For such regular families 
of scaling functions the characterization in \cite[Theorem 4.2]{MR2110512} with $\mathbf{S}=\mathbf{I}$ implies the existence
of  $C_\ell>0$ and $C_r>0$, respectively, such that, for all $f\in L^2(\mathbb{R})$,
\[\max\left\{\;\int_{\mathbb{R}^2}\;f(x)\;\Phi_\ell(x)^T\;\Phi_\ell(y)\;f(y)\;dx\;dy,\;\int_{\mathbb{R}^2}\;f(x)\;\Phi_r(x)^T\;\Phi_r(y)\;f(y)\;dx\;dy\;\right\}\;\leq\;\max\{C_\ell,C_r\}\;\|f\|^2_{L^2}.\]
Decompose the bi-infinite identity $\mathbf{I}\;=\;\mathbf{I}_\ell\;+\;\mathbf{I}_{irr}\;+\mathbf{I}_r$ with 
\begin{equation*}
 \mathbf{I}_\ell(j,j)=\left\{ \begin{array}{cc} 1, & j<1-2n, \\
0, & \hbox{otherwise}, \end{array}\right. \quad 
 \hbox{and} \quad  \mathbf{I}_r(j,j)=\left\{ \begin{array}{cc} 1, & j>2n-1, \\
0, & \hbox{otherwise}. \end{array}\right. 
\end{equation*}
Then, for all $f \in L^2(\mathbb{R})$, by \eqref{eq:our_S} and the Cauchy-Schwarz inequality, we have
\[\begin{array}{l}
\int_{\mathbb{R}^2}\;f(x)\;\Phi(x)^T\;\mathbf{S}\;\Phi(y)\;f(y)\;dx\;dy \;=\;
\int_{\mathbb{R}^2}\;f(x)\;\Phi(x)^T\;\left(\;\mathbf{I}_\ell\;+\;\mathbf{I}_{irr}\;\mathbf{S}\;\mathbf{I}_{irr}\;+\;\mathbf{I}_r\;\right)\;\Phi(y)\;f(y)\;dx\;dy\\ \\
\quad\qquad\qquad\leq\;\max\{C_\ell,C_r\}\;\|f\|^2_{L^2}\;+\;\sum\limits_{j=2-2n}^{2n-2}\;\sum\limits_{k=2-2n}^{2n-2}\;|\mathbf{S}(j,k)|\;
\left|\int_{\mathbb{R}}\;f(x)\;\phi_j(x)\;dx \right| \left|\int_\mathbb{R}\;\phi_k(y)\;f(y)\;dy \right| \\ \\
\quad\qquad\qquad\leq\;\left(\;\max\{C_\ell,C_r\}\;+ (4n-3)\;\|\mathbf{S}_{irr}\|_{\infty}\;\max_{k\in\mathcal{I}_{irr}}\|\phi_k\|^2_{L^2}\;\right)\;\|f\|^2_{L^2}.
\end{array}\]
Part $(ii)$: Using again $\mathbf{I}\;=\;\mathbf{I}_\ell\;+\;\mathbf{I}_{irr}\;+\mathbf{I}_r$ and \eqref{eq:phi_supp}, for every
$f\in L^2(\mathbb{R})$, we get
\[\begin{array}{l}
 \left\| f - 2^j\;\int_{\mathbb{R}} f(x)\;\Phi(2^jx)^T\;\mathbf{S}\;\Phi(2^j\cdot)\;dx \right\|^2_{L^2} \;\leq\; 
 \left\| f\;\chi_{(-\infty,0)} - 2^j\;\int_{-\infty}^0 f(x)\;\Phi_\ell(2^jx)^T\;\Phi_\ell(2^j\cdot)\;dx\right\|^2_{L^2}\;+ \\ \\
\quad+\;\left\| 2^j \int_{\mathbb{R}} f(x)\;\Phi(2^jx)^T\;\mathbf{I}_{irr}\;\mathbf{S}\;\mathbf{I}_{irr}\;\Phi(2^j\cdot)\;dx \right\|^2_{L^2}
\;+\;\left\| f \;\chi_{(0,\infty)} - 2^j \int_{0}^{\infty} f(x)\;\Phi_r(2^jx)^T\;\Phi_r(2^j\cdot)\;dx \right\|^2_{L^2} \\ \\
\quad=:\; \gamma_\ell\;+\;\gamma_{irr}\;+\;\gamma_r.
\end{array}\]
The indices of the non-zero elements of $\mathbf{I}_{irr}\;\mathbf{S}\;\mathbf{I}_{irr}$ belong to the set 
$\mathcal{I}_{irr}\times\mathcal{I}_{irr}$ in  \eqref{eq:I_irr}, thus, 
\[
  \bigcup_{k\in\mathcal{I}_{irr}}\;\supp(\phi_k)=:[a,b], \quad -\infty<a\le b < \infty.
\]
The continuity of $\Phi$ and the Cauchy-Schwarz inequality yield
$$
\gamma_{irr}\;=\;2^{2j} \int_{\frac{[a,b]}{2^j}}\;\left| \int_{\frac{[a,b]}{2^j}}\;f(x)\;\Phi(2^jx)^T\;\mathbf{I}_{irr}\;\mathbf{S}
\;\mathbf{I}_{irr}\;\Phi(2^jy)\;dx \right|^2 dy 
\;\leq\; C\; \|f\|^2_{L^2\left(\frac{[a,b]}{2^j}\right)}
$$
with the constant $C=(b-a)^2 \|\Phi^T\;\mathbf{I}_{irr}\;\mathbf{S}\;\mathbf{I}_{irr}\;\Phi\|^2_{L^\infty}$. Thus, $\gamma_{irr}$
goes to zero as $j$ goes to $\infty$. Moreover, since $f\chi_{(-\infty,0)}$ and $f\chi_{(0,\infty)}$ belong to $L^2(\mathbb{R})$, by the argument 
from the regular case, both  $\gamma_\ell$ and $\gamma_r$ go to zero as $j$ goes to $\infty$. 

\noindent On the other hand, for the sequence 
\[
\mathbf{f}_j\;=\;2^{j/2}\int_\mathbb{R}\;f(x)\;\Phi(2^jx)\;dx, \quad f\in L^2(\mathbb{R}), \quad j\in\mathbb{Z},
\]
with $[a_k,b_k]=\operatorname{supp}(\phi_k)$, $k \in \mathbb{Z}$, by Lemma~\ref{lem:Lemma1} part $(iii)$, we have
\begin{eqnarray*}
\|\mathbf{f}_j\|^2_{\ell^2} \;\leq\; \sum\limits_{k\in\mathbb{Z}}\;\int_{\frac{[a_k,b_k]}{2^j}}\;\left|f(x)\right|^2\;dx\;\int_{\mathbb{R}}\;\left|\phi_k(2^j x)\right|^2\;2^j\;dx 
\;\leq\;  s\; \|f\|^2_{L^2}\;\max_{k\in\mathbb{Z}}\|\phi_k\|^2_{L^2}\;<\; \infty.
\end{eqnarray*}
Thus,  $\mathbf{f}_j \in \ell^2(\mathbb{Z})$ and, consequently, $\mathbf{S}\;\mathbf{f}_j \in\ell^2(\mathbb{Z})$. Hence, by Lemma~\ref{lem:Lemma1} part $(i)$,
we have
\[2^j\;\int_{\mathbb{R}}\;f(x)\;\Phi(2^jx)^T\;\mathbf{S}\;\Phi(2^j\cdot)\;dx\;=\;2^{j/2}\;\mathbf{f}_j^T\;\mathbf{S}\;\Phi(2^j\cdot)\;\in\;\overline{\lspan(\Phi(2^j\cdot))}^{L^2}.
\] 
Due to the MRA structure \eqref{eq:MRA}, we get $\lim_{j\rightarrow -\infty} 2^{j/2}\;\mathbf{f}_j^T\;\mathbf{S}\;\Phi(2^j\cdot) =0$.
Therefore, the claim follows due to the continuity of the inner product.
\end{proof}

\noindent Examples in section~\ref{sec:examples} and  numerical evidence 
for $n=3,\ldots,8$ with different $h_\ell,h_r>0$ (defining the mesh $\mathbf{t}$) lead to the following conjecture.

\begin{conj} \label{conjecture} Let  $\mathbf{S}$ be defined by Algorithm~1, $\mathbf{P}$ and $\mathbf{D}$ as in
section~\ref{sec:DD} and $\mathbf{p}$, $\mathbf{q}_1$ and $\mathbf{q}_2$ as in Proposition~\ref{cor:regDDframe}.
\begin{enumerate}
 \item[$(i)$] For
\begin{equation}\label{eq:R} 
\mathbf{R}=\mathbf{S}\;-\frac{1}{2}\;\mathbf{D}^{1/2}\;\mathbf{P}\;\mathbf{D}^{-1/2}\mathbf{S}\;\mathbf{D}^{-1/2}\;\mathbf{P}^T\;\mathbf{D}^{1/2},
\end{equation}
and  $\mathbf{R}_k$, $k\not\in\mathcal{I}_{irr}$ with entries
\begin{equation}\label{eq:R_reg_block}
\mathbf{R}_k(u,v)=\mathbf{p}\mathbf{p}^T(u-2k,v-2k) +\mathbf{q}_1\mathbf{q}_1^T (u-2k,v-2k)+
\mathbf{q}_2\mathbf{q}_2^T (u-2k,v-2k), \quad u,v\in\mathbb{Z},
\end{equation}
the matrix
\begin{equation} \label{eq:R_irr}
  \mathbf{R}_{irr}=\mathbf{R}\;-\; \frac{1}{2}\sum\limits_{k\not\in\mathcal{I}_{irr}}\;\mathbf{R}_k
	\end{equation}
 is positive semi-definite.
\item[$(ii)$] For all $\alpha\in\{0,\dots,n-1\}$, the bi-infinite vectors $\mathbf{c}_\alpha$ in \eqref{eq:DD_rep_renorm} and the
moments
$\mathbf{m}_\alpha$ in \eqref{eq:moments} satisfy 
\begin{equation}\label{eq:mom_to_coeff}
                 \mathbf{S}\;\mathbf{m}_\alpha\;=\;\mathbf{c}_\alpha.
\end{equation}
\end{enumerate}
\end{conj}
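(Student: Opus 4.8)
The plan is to treat the exact linear identity $(ii)$ and the positivity statement $(i)$ separately, using throughout the two refinement relations implied by \eqref{eq:DD_rep_renorm} and \eqref{eq:ref_eq_renorm}. Setting $\mathbf{A}=\mathbf{D}^{1/2}\mathbf{P}\mathbf{D}^{-1/2}$, polynomial reproduction gives $\mathbf{A}\,\mathbf{c}_\alpha=2^{-\alpha}\mathbf{c}_\alpha$, while integrating \eqref{eq:ref_eq_renorm} against $x^\alpha$ gives $\mathbf{A}^{T}\mathbf{m}_\alpha=2^{\alpha+1}\mathbf{m}_\alpha$. These show that $(ii)$ is exactly the hypothesis needed for the framelets to have $n$ vanishing moments: since $\mathbf{R}=\mathbf{S}-\tfrac12\mathbf{A}\,\mathbf{S}\,\mathbf{A}^{T}$, substituting $\mathbf{S}\,\mathbf{m}_\alpha=\mathbf{c}_\alpha$ yields $\mathbf{R}\,\mathbf{m}_\alpha=\mathbf{c}_\alpha-2^{\alpha}\mathbf{A}\,\mathbf{S}\,\mathbf{m}_\alpha=\mathbf{c}_\alpha-2^{\alpha}\mathbf{A}\,\mathbf{c}_\alpha=\mathbf{0}$ for $\alpha\le n-1$, which, once the factorization $\mathbf{R}=\mathbf{Q}\mathbf{Q}^{T}$ provided by $(i)$ is in force, forces $\mathbf{Q}^{T}\mathbf{m}_\alpha=\mathbf{0}$. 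I would record this implication first, so that the two parts together deliver the tight frame with $n$ vanishing moments.

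For $(ii)$ itself I would split $\mathbf{S}\,\mathbf{m}_\alpha=\mathbf{c}_\alpha$ over the regular indices $k\notin\mathcal{I}_{irr}$, where $\mathbf{S}=\mathbf{I}$, and the block $\mathcal{I}_{irr}$, where $\mathbf{S}_{irr}=\widetilde{\mathbf{O}}\,\widetilde{\mathbf{O}}^{T}$ is the orthogonal projector onto $\mathrm{col}(\mathbf{C})=\lspan\{[\mathbf{c}_\gamma]_{irr}:\gamma<n\}$. On the regular part the identity reduces to $\mathbf{m}_\alpha(k)=\mathbf{c}_\alpha(k)$; as $\varphi_k$ is a scaled shift of the regular Dubuc--Deslauriers fundamental function there, this is equivalent to the vanishing of its continuous moments $M_\alpha=\int x^{\alpha}\varphi\,dx$ for $1\le\alpha\le2n-1$. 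I would obtain $M_\alpha=0$ from the moment recursion $M_\alpha=2^{-\alpha-1}\sum_{i}\binom{\alpha}{i}M_i\,\mu_{\alpha-i}$ together with $\mu_e:=\sum_{j}\mathbf{p}(j)\,j^{e}=0$ for $1\le e\le2n-1$: the latter follows from the interpolatory symbol identity $p(\omega)+p(\omega-\tfrac12)=1$ differentiated and evaluated at $\tfrac12$, using the order-$2n$ sum rules $p^{(e)}(\tfrac12)=0$; then only the $i=\alpha$ term of the recursion survives, giving $M_\alpha(1-2^{-\alpha})=0$. This makes the defect $\mathbf{e}_\alpha:=\mathbf{m}_\alpha-\mathbf{c}_\alpha$ supported inside $\mathcal{I}_{irr}$, and on that block $(ii)$ becomes, because $\mathbf{S}_{irr}$ fixes $\mathrm{col}(\mathbf{C})$, the orthogonality $\mathbf{C}^{T}[\mathbf{e}_\alpha]_{irr}=\mathbf{0}$, i.e. $\sum_{k\in\mathcal{I}_{irr}}\mathbf{c}_\beta(k)\,\mathbf{e}_\alpha(k)=0$ for all $\alpha,\beta\in\{0,\dots,n-1\}$.

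For $(i)$ I would exploit that, away from $\mathcal{I}_{irr}$, the matrix $\mathbf{R}$ is the regular detail operator factored by the regular framelets $\mathbf{q}_1,\mathbf{q}_2$ of Proposition~\ref{cor:regDDframe} (this being the matrix form of the UEP identity \eqref{eq:UEP}), so that the blocks $\mathbf{R}_k$ for $k\notin\mathcal{I}_{irr}$ in \eqref{eq:R_reg_block} reproduce the regular contributions to $\mathbf{R}$ and the difference \eqref{eq:R_irr} isolates the central $(12n-9)\times(12n-9)$ matrix $\mathbf{R}_{irr}$ attached to the irregular part of the mesh. Proving $(i)$ then amounts to showing this finite matrix is positive semi-definite, which is exactly the condition for a Cholesky-type factorization $\mathbf{R}_{irr}=\mathbf{Q}_{irr}\mathbf{Q}_{irr}^{T}$ defining the irregular framelets and completing the frame. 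I would try to exhibit $\mathbf{R}_{irr}$ as a Schur complement / congruence transform of $\mathbf{S}$ against the banded $\mathbf{A}$, so that its positivity is inherited from that of the projector $\mathbf{S}_{irr}$ up to a controlled residual, and, failing a closed form, reduce to verifying nonnegativity of its finitely many eigenvalues.

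The hard part in both cases is the irregular block. In $(ii)$ the orthogonality $\mathbf{C}^{T}[\mathbf{e}_\alpha]_{irr}=\mathbf{0}$ ties the \emph{non-symmetric} irregular scaling functions to the coefficient vectors $\mathbf{c}_\beta$, and the only available handle on $\mathbf{e}_\alpha=(\mathbf{G}_\Phi-\mathbf{I})\mathbf{c}_\alpha$ is through the Gramian entries produced by Theorem~\ref{thm:semireg_prod_mat}, which depend on the ratio $h_\ell/h_r$ in a way that resists closed-form evaluation; this is exactly where the statement turns conjectural. Likewise in $(i)$ the positive semi-definiteness of $\mathbf{R}_{irr}$ is sensitive to $h_\ell/h_r$ (indeed the scaling functions themselves exist only for suitable ratios), so no ratio-uniform argument can be expected and one must instead pin down the exact admissible range --- the step I expect to be the genuine obstacle.
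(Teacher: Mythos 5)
You should first note the status of this statement in the paper: it is a \emph{Conjecture}, and the paper contains no general proof of it. The paper's own support consists of three things: (a) Remark~\ref{rem:time_domain_Chui}, which observes that the regular case is known, reduces part~$(ii)$ to the finite identity $\mathbf{S}_{irr}\,\mathbf{M}=\mathbf{C}$ on the block $\mathcal{I}_{irr}$, and proves only the \emph{necessary} condition that $\mathbf{M}^T\mathbf{C}$ be symmetric; (b) exact verification of both parts for $n=1$ (where $\mathbf{S}_{irr}=1$) and symbolic verification for $n=2$ in Section~\ref{sec:examples} (a polynomial system for part~$(ii)$, and the generalized Sylvester criterion applied to a renormalized $\widetilde{\mathbf{R}}_{irr}$ for part~$(i)$, valid for $h\in(2/7,7/2)$); and (c) numerical evidence for $n=3,\dots,8$. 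Any assessment of your attempt has to be made against that benchmark, not against a full proof.

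Within that frame, your reductions are correct and essentially retrace the paper's partial analysis: the eigenvector relations $\mathbf{A}\mathbf{c}_\alpha=2^{-\alpha}\mathbf{c}_\alpha$ and $\mathbf{A}^T\mathbf{m}_\alpha=2^{\alpha+1}\mathbf{m}_\alpha$ and the implication $(i)+(ii)\Rightarrow n$ vanishing moments are exactly \eqref{eq:m_c_eig} and Proposition~\ref{prop:eigenvector}; the regular part of $(ii)$ is \eqref{eq:reg_m_c}, which you prove via the moment recursion and $\mu_e=0$, $1\le e\le 2n-1$ (the paper merely cites this, so your self-contained derivation is a small plus); and your observation that the irregular part of $(ii)$ is \emph{equivalent} to the orthogonality $\mathbf{C}^T(\mathbf{M}-\mathbf{C})=\mathbf{0}$ is correct and slightly sharper than the paper's symmetry check, since it is an equivalence rather than a necessary condition. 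The genuine gap is that both finite-dimensional cores are left untouched: you prove the orthogonality for no value of $n$, and for part~$(i)$ the Schur-complement/congruence suggestion is a hope rather than an argument --- indeed no ratio-uniform argument can exist, as the restriction $h\in(2/7,7/2)$ for $n=2$ shows, so positivity must be established on the exact admissible range of $h_\ell/h_r$. In short, you stop precisely where the statement becomes conjectural, which is honest but means the attempt establishes strictly less than the paper does: your finite reformulations would have made the concrete verifications for $n=1$ (trivial) and $n=2$ (symbolic, via the Sylvester criterion) straightforward to carry out, and those computations are the actual content of the paper's evidence.
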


\begin{rem} Note that the requirement that $\mathbf{R}_{irr}$ is positive semi-definite is stronger than the positive semi-definedness
of $\mathbf{R}$, which is one of the sufficient conditions in \cite[Theorem 4.2]{MR2110512} that ensures the existence of the 
wavelet tight frame
\begin{equation}\label{def:Psi}
 \Psi_j=2^{j/2}\mathbf{Q}^T \Phi(2^j\cdot), \quad \mathbf{R}\;=\;\mathbf{Q}\;\mathbf{Q}^T,\quad j\in\mathbb{N}
\end{equation}
Here indeed we also get an explicit form 
\[
 \mathbf{Q}\;=\;\left[\begin{array}{ccccc} \dots & \mathbf{Q}_{\min(\mathcal{I}_{irr})-1} &\mathbf{Q}_{irr}& 
\mathbf{Q}_{\max(\mathcal{I}_{irr})+1}&\dots \end{array}\right]
\]
with $\mathbf{R}_{irr}=\mathbf{Q}_{irr}\mathbf{Q}_{irr}^T$ and $\mathbf{R}_{k}=\mathbf{Q}_{k}\mathbf{Q}_{k}^T$ for 
$k \not\in\mathcal{I}_{irr}$.
\end{rem}

\noindent Apart from Examples in section~\ref{sec:examples} and  numerical evidence 
for $n=3,\ldots,8$, there are other facts that support Conjecture~\ref{conjecture}.

\begin{rem} \label{rem:time_domain_Chui} 
$(i)$ In the regular case Conjecture~\ref{conjecture} is well known. Indeeed, the characterization in 
\cite[Theorem 4.2]{MR2110512} is also valid and the construction in Proposition~\ref{cor:regDDframe} is one particular factorization 
of $\mathbf{R}$ in \eqref{eq:R}  with $\mathbf{S}=\mathbf{I}$. 
Indeed, \eqref{eq:UEP} is equivalent to
\[\left\{\begin{array}{l}
\;[ e^{i2k\pi\omega}\;:\;k\in\mathbb{Z}]^T\;\left(\;\frac{1}{2}\mathbf{p}\mathbf{p}^T\;+\;\frac{1}{2}\sum\limits_{j=1}^J\mathbf{q}_j\mathbf{q}_j^T\;\right)\;[\;e^{-i2k\pi\omega}\;:\;k\in\mathbb{Z}] =  2, \\
\;[ e^{i2k\pi\omega}\;:\;k\in\mathbb{Z}]^T\;\left(\;\frac{1}{2}\mathbf{p}\mathbf{p}^T\;+\;\frac{1}{2}\sum\limits_{j=1}^I\mathbf{q}_j\mathbf{q}_j^T\;\right) [(-1)^k e^{i2k\pi\omega}\;:\;k\in\mathbb{Z}]=0,
\end{array}\right. \quad \omega \in [0,1).
\]
The identity \eqref{eq:R_irr}, due to $\mathbf{t}=\mathbb{Z}$, becomes
\begin{equation*}
   \mathbf{R}_{irr}=\mathbf{R}-\frac{1}{2}\sum\limits_{k\in\mathbb{Z}}\;\mathbf{R}_k=0.
\end{equation*}
Moreover, in the regular case, by \cite{MR2110512} and Proposition~\ref{cor:regDDframe}, 
we have
\begin{equation}\label{eq:reg_m_c}
\mathbf{m}_\alpha\;=\;\mathbf{c}_\alpha,\quad \alpha\in\{0,\dots,n-1\}.
\end{equation}

\noindent $(ii)$ In the irregular case,  due to the special structure of $\mathbf{S}$ and \eqref{eq:reg_m_c} from the regular case, 
the identity \eqref{eq:mom_to_coeff} reduces to an identity for certain finite matrices. For $\mathbf{m}_\alpha$ in \eqref{eq:moments} and $\mathbf{c}_\alpha$ in \eqref{eq:DD_rep_renorm}, $\alpha=0,\ldots,n-1$, define 
\[\mathbf{M}=\left[[\mathbf{m}_0(k)]_{k\in\mathcal{I}_{irr}}|\dots|[\mathbf{m}_{n-1}(k)]_{k\in\mathcal{I}_{irr}}\right]\quad\textrm{and}\quad\mathbf{C}=\left[[\mathbf{c}_0(k)]_{k\in\mathcal{I}_{irr}}|\dots|[\mathbf{c}_{n-1}(k)]_{k\in\mathcal{I}_{irr}}\right].\]
Then the irregular part of \eqref{eq:mom_to_coeff} becomes
$\mathbf{S}_{irr} \mathbf{M} =\mathbf{C}$, which implies
\[ \mathbf{M}^T\;\mathbf{C}\;=\;\mathbf{M}^T\;\mathbf{S}_{irr}\;\mathbf{M}.\]
Thus, for \eqref{eq:mom_to_coeff} to hold the matrix $\mathbf{M}^T\;\mathbf{C}$ must be symmetric. 
Indeed, for every $\alpha,\beta\in\{0,\dots,n-1\}$, by \eqref{eq:DD_rep_renorm}, we get
\[\begin{array}{l}
0\;=\;x^\alpha\Phi^T\;\mathbf{c}_\beta-\mathbf{c}_\alpha^T\;\Phi x^\beta 
\;=\; [x^\alpha\phi_k(x)]^T_{k\in\mathcal{I}_{irr}}\;[\mathbf{c}_{\beta}(k)]_{k\in\mathcal{I}_{irr}}\;
-\;[\mathbf{c}_{\alpha}(k)]_{k\in\mathcal{I}_{irr}}^T\;[x^\beta\phi_k(x)]_{k\in\mathcal{I}_{irr}} \\ \\
\qquad\qquad\qquad\qquad\qquad\qquad\qquad\qquad\qquad\qquad\;+\; \sum\limits_{k\not\in\mathcal{I}_{irr}} \left(\mathbf{c}_\beta(k) \; x^\alpha\phi_k(x)-\mathbf{c}_\alpha(k)x^\beta\phi_k(x) \right),
\quad x \in \mathbb{R}.
\end{array}\]
Integrating both sides of the above identity and using \eqref{eq:reg_m_c} for the summation over $k\not\in\mathcal{I}_{irr}$, we obtain
\[\begin{array}{l}
 (\mathbf{M}^T\mathbf{C})(\alpha,\beta)-(\mathbf{M}^T \mathbf{C})(\beta,\alpha)c\;=\; \\ \\
\qquad\qquad\;=\; [\mathbf{m}_\alpha(k)]_{k\in\mathcal{I}_{irr}}^T \; [\mathbf{c}_\beta(k)]_{k\in\mathcal{I}_{irr}} \;-\;[\mathbf{c}_\alpha(k)]_{k\in\mathcal{I}_{irr}}^T\; [\mathbf{m}_\beta(k)]_{k\in\mathcal{I}_{irr}}=0, \qquad \alpha,\beta\in\{0,\dots,n-1\}.
\end{array}\]
We strongly believe  that part $(ii)$ of Conjecture~\ref{conjecture} is due to some special, intriguing property of the Dubuc-Deslauriers schemes.
\end{rem}

\noindent 
Furthermore, Lemma \ref{lem:Lemma2} and Conjecture~\ref{conjecture} part $(ii)$ are sufficient to ensure the higher number of vanishing moments
of framelets in \eqref{def:Psi} without additional assumptions \cite[Assumption 3, Corollary 4.8]{MR2110512} on the existence of the anti-difference schemes. Proposition~\ref{prop:eigenvector} shows that part $(i)$ of Conjecture~\ref{conjecture} is sufficient to guarantee $n$ vanishing moments for Dubuc-Deslauriers framelets in \eqref{def:Psi}. 

\begin{prop} \label{prop:eigenvector}
Let $\mathbf{P}$ be the subdivision matrix of a convergent Dubuc-Deslauriers $2n$-point semi-regular scheme. If $\;\mathbf{R}_{irr}$ in
\eqref{eq:R_irr} is positive semi-definite, then the framelets $\Psi$ in \eqref{def:Psi} satisfy
$$
 \int_{\mathbb{R}}\;\Psi_j(x)\; x^\alpha\;dx\;=\;\mathbf{0}, \quad  \alpha\in\{0,\dots,n-1\},\;j\in\mathbb{N}.
$$
\end{prop}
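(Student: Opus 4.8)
The plan is to reduce the vanishing-moment condition to a single algebraic identity on the moment vectors $\mathbf{m}_\alpha$ and then read it off from the structure of $\mathbf{R}$. First I would substitute the definition \eqref{def:Psi} of $\Psi_j$ and change variables $y=2^jx$ in the integral: since $\int_\mathbb{R}\Phi(2^jx)\,x^\alpha\,dx=2^{-j(\alpha+1)}\mathbf{m}_\alpha$ by \eqref{eq:moments}, one gets $\int_\mathbb{R}\Psi_j(x)\,x^\alpha\,dx=2^{-j(\alpha+\tfrac12)}\,\mathbf{Q}^T\mathbf{m}_\alpha$. Hence it suffices to prove $\mathbf{Q}^T\mathbf{m}_\alpha=\mathbf{0}$ for every $\alpha\in\{0,\dots,n-1\}$, uniformly in $j$.

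Next I would record two dual eigenrelations. Iterating the renormalized refinement equation \eqref{eq:ref_eq_renorm} together with the dilation scaling of \eqref{eq:moments} shows that $\mathbf{m}_\alpha$ is a right eigenvector of $\mathbf{D}^{-1/2}\mathbf{P}^T\mathbf{D}^{1/2}$ with eigenvalue $2^{\alpha+1}$, equivalently $\mathbf{P}^T\mathbf{D}^{1/2}\mathbf{m}_\alpha=2^{\alpha+1}\mathbf{D}^{1/2}\mathbf{m}_\alpha$. Dually, the polynomial reproduction built into steps 1.--3. gives $\mathbf{P}\,\mathbf{t}^\alpha=2^{-\alpha}\mathbf{t}^\alpha$, so by \eqref{eq:DD_rep_renorm} the vector $\mathbf{c}_\alpha=\mathbf{D}^{1/2}\mathbf{t}^\alpha$ satisfies $\mathbf{D}^{1/2}\mathbf{P}\mathbf{D}^{-1/2}\mathbf{c}_\alpha=2^{-\alpha}\mathbf{c}_\alpha$. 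The crucial third ingredient is the moment-to-coefficient identity $\mathbf{S}\,\mathbf{m}_\alpha=\mathbf{c}_\alpha$ from \eqref{eq:mom_to_coeff} (Conjecture~\ref{conjecture}$(ii)$), which acts as the bridge carrying $\mathbf{m}_\alpha$ from the $2^{\alpha+1}$-eigenspace of $\mathbf{D}^{-1/2}\mathbf{P}^T\mathbf{D}^{1/2}$ into the $2^{-\alpha}$-eigenspace of $\mathbf{D}^{1/2}\mathbf{P}\mathbf{D}^{-1/2}$.

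With these in hand I would compute $\mathbf{R}\,\mathbf{m}_\alpha$ directly from \eqref{eq:R}. The right-hand factor $\mathbf{D}^{-1/2}\mathbf{P}^T\mathbf{D}^{1/2}$ applied to $\mathbf{m}_\alpha$ produces $2^{\alpha+1}\mathbf{m}_\alpha$; then $\mathbf{S}\mathbf{m}_\alpha=\mathbf{c}_\alpha$ turns the inner term into $2^{\alpha+1}\mathbf{c}_\alpha$, and the outer factor $\mathbf{D}^{1/2}\mathbf{P}\mathbf{D}^{-1/2}$ contributes $2^{-\alpha}$. The prefactor $\tfrac12$ in \eqref{eq:R} exactly absorbs $2^{\alpha+1}\cdot 2^{-\alpha}=2$, so the second term equals $\mathbf{c}_\alpha$ and cancels $\mathbf{S}\mathbf{m}_\alpha=\mathbf{c}_\alpha$, giving $\mathbf{R}\,\mathbf{m}_\alpha=\mathbf{0}$. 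Finally, since the regular blocks $\mathbf{R}_k$ in \eqref{eq:R_reg_block} are sums of outer products (hence positive semi-definite) and $\mathbf{R}_{irr}$ is positive semi-definite by hypothesis, the decomposition $\mathbf{R}=\mathbf{R}_{irr}+\tfrac12\sum_{k\not\in\mathcal{I}_{irr}}\mathbf{R}_k$ coming from \eqref{eq:R_irr} makes $\mathbf{R}$ positive semi-definite and factorizable as $\mathbf{R}=\mathbf{Q}\mathbf{Q}^T$ as in \eqref{def:Psi}; from $\mathbf{R}\mathbf{m}_\alpha=\mathbf{0}$ we obtain $\|\mathbf{Q}^T\mathbf{m}_\alpha\|^2=\mathbf{m}_\alpha^T\mathbf{R}\mathbf{m}_\alpha=0$, whence $\mathbf{Q}^T\mathbf{m}_\alpha=\mathbf{0}$.

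The main obstacle is not the cancellation itself but securing its ingredients: the exact cancellation hinges on $\mathbf{S}\mathbf{m}_\alpha=\mathbf{c}_\alpha$, which is precisely Conjecture~\ref{conjecture}$(ii)$, so the argument is genuinely conditional on that relation, while the positive semi-definiteness hypothesis only supplies the factorization $\mathbf{R}=\mathbf{Q}\mathbf{Q}^T$. A secondary, purely technical point is that $\mathbf{m}_\alpha$ and $\mathbf{c}_\alpha$ grow polynomially and are not in $\ell^2$; I would therefore justify the bi-infinite matrix manipulations and the passage $\|\mathbf{Q}^T\mathbf{m}_\alpha\|^2=\mathbf{m}_\alpha^T\mathbf{R}\mathbf{m}_\alpha$ by invoking the finite bandwidth of $\mathbf{P}$, $\mathbf{S}$ and $\mathbf{Q}$, which renders every sum locally finite and every transposition legitimate.
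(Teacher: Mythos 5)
Your first three steps coincide exactly with the paper's proof: the reduction of the claim to $\mathbf{Q}^T\mathbf{m}_\alpha=\mathbf{0}$, the two eigenrelations $\mathbf{D}^{-1/2}\mathbf{P}^T\mathbf{D}^{1/2}\mathbf{m}_\alpha=2^{\alpha+1}\mathbf{m}_\alpha$ and $\mathbf{D}^{1/2}\mathbf{P}\mathbf{D}^{-1/2}\mathbf{c}_\alpha=2^{-\alpha}\mathbf{c}_\alpha$, and the cancellation giving $\mathbf{R}\,\mathbf{m}_\alpha=\mathbf{0}$ via $\mathbf{S}\mathbf{m}_\alpha=\mathbf{c}_\alpha$; you are also right that the argument is conditional on \eqref{eq:mom_to_coeff}, which the paper's proof invokes explicitly. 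The gap is in your concluding step. The implication ``$\mathbf{R}=\mathbf{Q}\mathbf{Q}^T$ locally finite and positive semi-definite, $\mathbf{R}\mathbf{m}_\alpha=\mathbf{0}$ entrywise $\Rightarrow\;\|\mathbf{Q}^T\mathbf{m}_\alpha\|^2=\mathbf{m}_\alpha^T\mathbf{R}\mathbf{m}_\alpha=0$'' is false for vectors outside $\ell^2$, and finite bandwidth does not rescue it: equating $\sum_c\bigl(\sum_k\mathbf{Q}(k,c)\mathbf{m}_\alpha(k)\bigr)^2$ with $\sum_k\mathbf{m}_\alpha(k)\,(\mathbf{R}\mathbf{m}_\alpha)(k)$ requires exchanging two infinite, non-absolutely-convergent sums. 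Concretely, let $\mathbf{Q}$ have columns $\mathbf{e}_c+\mathbf{e}_{c+1}$, $c\in\mathbb{Z}$, and $\mathbf{m}(k)=(-1)^k k$ (linear growth, exactly like $\mathbf{m}_1$). Then $\mathbf{R}=\mathbf{Q}\mathbf{Q}^T$ is tridiagonal, locally finite and positive semi-definite on $\ell^2$, and $\mathbf{R}\mathbf{m}=\mathbf{0}$ entrywise, yet $(\mathbf{Q}^T\mathbf{m})(c)=(-1)^{c+1}\neq0$ for every $c$: here $\|\mathbf{Q}^T\mathbf{m}\|^2=\infty$ while $\mathbf{m}^T(\mathbf{R}\mathbf{m})=0$. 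So $\mathbf{R}\mathbf{m}_\alpha=\mathbf{0}$ alone, even with positive semi-definiteness, does not yield $\mathbf{Q}^T\mathbf{m}_\alpha=\mathbf{0}$.

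The missing idea --- and the paper's actual route --- is localization to the finite irregular block. By Proposition~\ref{cor:regDDframe} the \emph{regular} framelets have $n$ vanishing moments; the regular columns of $\mathbf{Q}$ are the shifts $\mathbf{q}_{1,k},\mathbf{q}_{2,k}$, $k\not\in\mathcal{I}_{irr}$, whose supports meet only regular scaling functions, so they annihilate $\mathbf{m}_\alpha$ directly. This is what the paper records as $\mathbf{R}_k\,\mathbf{m}_\alpha=\mathbf{0}$, and it uses the vanishing moments of $\mathbf{q}_1,\mathbf{q}_2$, not merely the fact that $\mathbf{R}_k$ is a sum of outer products (which is all your argument exploits). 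Combined with $\mathbf{R}\mathbf{m}_\alpha=\mathbf{0}$ and \eqref{eq:R_irr}, this gives $\mathbf{R}_{irr}\,\mathbf{m}_\alpha=\mathbf{0}$. Since $\mathbf{R}_{irr}$ is a \emph{finite} matrix, the quadratic-form argument you wanted is now legitimate finite-dimensional linear algebra: $\mathbf{R}_{irr}=\mathbf{Q}_{irr}\mathbf{Q}_{irr}^T$ and $0=\mathbf{m}_\alpha^T\mathbf{R}_{irr}\mathbf{m}_\alpha=\|\mathbf{Q}_{irr}^T\mathbf{m}_\alpha\|^2$, whence $\mathbf{Q}_{irr}^T\mathbf{m}_\alpha=\mathbf{0}$. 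Together with the regular columns this gives $\mathbf{Q}^T\mathbf{m}_\alpha=\mathbf{0}$ column by column, and the vanishing moments then follow from the scaling identity in your first paragraph.
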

\begin{proof} Let $\alpha \in\{0,\dots,n-1\}$. We start by proving that the moments $\mathbf{m}_\alpha$ in \eqref{eq:moments} and 
$\mathbf{c}_\alpha$ in \eqref{eq:DD_rep} satisfy
\begin{equation}\label{eq:m_c_eig}
\mathbf{m}_\alpha^T\;(\mathbf{D}/2)^{1/2}\mathbf{P}\mathbf{D}^{-1/2}\;=\;2^{\alpha+1/2}\mathbf{m}_\alpha^T\qquad\textrm{and}\qquad(\mathbf{D}/2)^{1/2}\mathbf{P}\mathbf{D}^{-1/2}\mathbf{c}_\alpha\;=\;2^{-(\alpha+1/2)}\mathbf{c}_\alpha
\end{equation}
with $\mathbf{D}$ in \eqref{eq:D_mat}.
From \eqref{eq:moments}, using the refinement equation \eqref{eq:ref_eq_renorm}, we obtain
\begin{eqnarray*}
2^{\alpha+1/2}\mathbf{m}_\alpha^T&=&2^{\alpha+1/2}\;\int_\mathbb{R}\;x^\alpha\;\Phi^T(x)\;dx\;=
\; 2^{\alpha+1/2}\;\int_\mathbb{R}\;x^\alpha\;\Phi^T(2x)\;dx\;\mathbf{D}^{1/2}\;\mathbf{P}\;\mathbf{D}^{-1/2}\\ \\
&=&\frac{1}{\sqrt{2}}\;\int_\mathbb{R}\;y^\alpha\;\Phi^T(y)\;dy\;\mathbf{D}^{1/2}\;\mathbf{P}\;\mathbf{D}^{-1/2} \;=\; \mathbf{m}_\alpha^T\;(\mathbf{D}/2)^{1/2}\mathbf{P}\mathbf{D}^{-1/2}.
\end{eqnarray*}
To show the second identity in \eqref{eq:m_c_eig}, first note that, by the construction of $\mathbf{P}$ in Section~\ref{sec:DD}, 
we have that $\mathbf{P}\mathbf{t}^\alpha=2^{-\alpha}\mathbf{t}^\alpha$ for $\mathbf{t}$ in \eqref{eq:t}. Thus, 
due to \eqref{eq:DD_rep_renorm}, we get
 \begin{eqnarray*}
2^{-(\alpha+1/2)}\mathbf{c}_\alpha=2^{-(\alpha+1/2)}\;\mathbf{D}^{1/2}\;\mathbf{t}^\alpha\;=\; \frac{1}{\sqrt{2}}\;\mathbf{D}^{1/2}\;\mathbf{P}\;\mathbf{t}^\alpha
=(\mathbf{D}/2)^{1/2}\mathbf{P}\mathbf{D}^{-1/2}\;\mathbf{D}^{1/2}\;\mathbf{t}^\alpha\;=\;(\mathbf{D}/2)^{1/2}\mathbf{P}\mathbf{D}^{-1/2}\;\mathbf{c}_\alpha.
\end{eqnarray*}
Next, by \eqref{eq:R} and the first identity in \eqref{eq:m_c_eig}, we get
$$
\begin{array}{rcl}
 \mathbf{R}\;\mathbf{m}_\alpha=\mathbf{S}\;\mathbf{m}_\alpha\;-\;\frac{1}{2}\;\mathbf{D}^{1/2}\;\mathbf{P}\;\mathbf{D}^{-1/2}\mathbf{S}\;\mathbf{D}^{-1/2}\;\mathbf{P}^T\;\mathbf{D}^{1/2}\mathbf{m}_\alpha
= \mathbf{S}\;\mathbf{m}_\alpha\;-2^{\alpha+1/2}\;(\mathbf{D}/2)^{1/2}\;\mathbf{P}\;\mathbf{D}^{-1/2}\mathbf{S}\;\mathbf{m}_\alpha.
\end{array}
$$
Thus, by \eqref{eq:mom_to_coeff} and the second identity in \eqref{eq:m_c_eig}, $\mathbf{R} \mathbf{m}_\alpha=0$.
Since, by Proposition~\ref{cor:regDDframe} and \eqref{eq:R_reg_block}, we have $\mathbf{R}_k\mathbf{m}_\alpha=0$, $k\not\in\mathcal{I}_{irr}$, due to \eqref{eq:R_irr}, we obtain 
$$
 \mathbf{R}_{irr}\;\mathbf{m}_\alpha\;=\;\mathbf{R}\;\mathbf{m}_\alpha\;-\frac{1}{2}\;\sum\limits_{k\not\in\mathcal{I}_{irr}}\;\mathbf{R}_k\;\mathbf{m}_\alpha\;=\;\mathbf{0}.
$$
Therefore, with $\mathbf{Q}$ in \eqref{def:Psi}, we have
\[\int_{\mathbb{R}}\;\Psi_j(x)\; x^\alpha\;dx\;=\;2^{j/2}\;\mathbf{Q}^T\;\int_{\mathbb{R}}\;\Phi(2^jx)\; x^\alpha\;dx\;=\;2^{-j(\alpha+1/2)}\;\mathbf{Q}^T\;\mathbf{m}_\alpha\;=\;\mathbf{0}.\]
\end{proof}

\begin{rem} Note that while the first identity in \eqref{eq:m_c_eig} does not rely on any property of Dubuc-Deslauriers schemes, the second one is particular to subdivision that reproduces polynomial up to degree $n-1$. This justifies our choice of the Dubuc-Deslauriers schemes, 
which posses the best ratio between polynomial reproduction and the support length of their basic limit functions.
\end{rem}

\begin{rem}
If one chooses $\widetilde{n}<n$ columns of $\mathbf{O}$ in \eqref{eq:our_S}, then the corresponding matrix $\mathbf{S}$ would
generate a wavelet tight frame with $\widetilde{n}$ vanishing moments. Since it is not possible to get more than $n$ vanishing moments
in the regular case, the choice $\widetilde{n}=n$ is optimal in the semi-regular case.
\end{rem}

\section{Examples} \label{sec:examples}

\noindent In subsections~\ref{subsec:n_1} and \ref{subsec:n_2}, we present two simple examples illustrating the construction in section~\ref{sec:semi_reg} for $n=1$ and
$n=2$, respectively. The small bandwidth  of the corresponding subdivision matrices $\mathbf{P}$ allows for exact computations in terms of
the mesh parameter. Without loss of generality, after a suitable renormalization, we consider the mesh $\mathbf{t}$ with $h_\ell=1$ and $h_r=h$, $h>0$. In the case $n=1$, the Dubuc-Deslauriers $2$-point scheme corresponds to the linear B-spline scheme. The case $n=2$, is more interesting and involved due to the high complexity of the entries of the corresponding matrices. For these two examples we are able to prove both parts of 
Conjecture~\ref{conjecture}. 

For the interested reader, the irregular filters $\mathbf{Q}_{irr}$ for $n=2,3,4,5$ and several values of $h_r>0$ are available in \cite{filters}.

\subsection{Case $n=1$: linear B-spline scheme} \label{subsec:n_1}

\noindent In the regular case, i.e. $h_\ell=h_r=1$, the linear B-spline scheme is defined by the mask \[[\mathbf{p}(k)\ : \ k=-1,0,1]=
\left[\begin{array}{ccc}\frac{1}{2} & 1 & \frac{1}{2} \end{array} \right].\] By 
Proposition~\ref{cor:regDDframe}, with $[\mathbf{d}(k)\ :\ k=-1,0]=\left[\begin{array}{cc}1& 1 \end{array} \right]$, we get the well-known
\[
  [\mathbf{q}_1(k)\ : \ k=-1,0,1]=\frac{1}{\sqrt{2}} \left[\begin{array}{ccc} 1 & 0 &-1 \end{array} \right] \quad \textrm{and} \quad 
	[\mathbf{q}_2(k)\ : \ k=-1,0,1]=\frac{1}{2}\left[\begin{array}{ccc} -1&2&-1 \end{array} \right].
\]
In the semi-regular case,  the subdivision matrix $\mathbf{P}$ does not depend on $h$ and is the $2$-slanted matrix with columns
determined by $\mathbf{p}^T$. The corresponding basic limit functions $\{\varphi_k\;:\;k\in\mathbb{Z}\}$ of the subdivision scheme, 
the so-called \emph{hat functions},
\[
\varphi_k(x)=\left\{\begin{array}{cl}
\frac{x-\mathbf{t}(k-1)}{\mathbf{t}(k)-\mathbf{t}(k-1)},& x\in[\mathbf{t}(k-1),\mathbf{t}(k)], \\ \\
\frac{\mathbf{t}(k+1)-x}{\mathbf{t}(k+1)-\mathbf{t}(k)},& x\in[\mathbf{t}(k),\mathbf{t}(k+1)], \\ \\
0,& \textrm{otherwise},
\end{array}\right.\quad
\textrm{satisfy}
\quad\int_\mathbb{R}\;\varphi_k(x)\;dx\;=\;\left\{\begin{array}{cl}
1, & k <0,\\ \\
\frac{1+h}{2},& k=0, \\ \\
h, &  k>0.
\end{array}\right.\]
Thus, the entries of $\mathbf{D}$ in \eqref{eq:D_mat} are well defined for every $h>0$ and, the first moments of the scaling functions, 
by \eqref{eq:D_mat}, 
satisfy  
\[
 \mathbf{m}_0=\mathbf{D}^{1/2} \mathbf{1}=\left[ \begin{array}{ccccccccc} \dots & 1& \dots & 1 & \sqrt{\frac{1+h}{2}} & \sqrt{h} & \dots & \sqrt{h}&\dots \end{array}\right].
\]
The {\bf Algorithm 1} computes $\mathbf{S}_{irr}=1$, thus, by \eqref{eq:DD_rep_renorm}, part $(ii)$ of Conjecture~\ref{conjecture} is true. Moreover, by \eqref{eq:R_irr}, we get
\[\mathbf{R}_{irr}\;=\;{\scriptsize\begin{bmatrix}
\frac{2h+1}{4(h+1)} & -\frac{\sqrt{2}}{4\sqrt{h+1}} & -\frac{\sqrt{h}}{4(h+1)} \\ \\
-\frac{\sqrt{2}}{4\sqrt{h+1}}  & \frac{1}{2} & -\frac{\sqrt{2h}}{4\sqrt{h+1}} \\ \\
-\frac{\sqrt{h}}{4(h+1)} & -\frac{\sqrt{2h}}{4\sqrt{h+1}} & \frac{h+2}{4(h+1)}
\end{bmatrix}}\;=\;\mathbf{Q}_{irr}\mathbf{Q}_{irr}^T\quad\textrm{with}\quad\mathbf{Q}_{irr}\;=\;{\scriptsize\left[\begin{array}{cc} -\frac{1}{2\,\sqrt{h+1}} & \frac{\sqrt{2h}}{2\,\sqrt{h+1}}\\ \\
 \frac{\sqrt{2}}{2} & 0\\ \\
-\frac{\sqrt{h}}{2\,\sqrt{h+1}} & -\frac{\sqrt{2}}{2\,\sqrt{h+1}} \end{array}\right]},\] 
therefore, part $(i)$ of Conjecture~\ref{conjecture} is also true. Proposition~\ref{prop:eigenvector} guarantees  that the irregular framelets in \eqref{def:Psi} have one vanishing moment.

\subsection{Case $n=2$: Dubuc-Deslauriers $4$-point scheme} \label{subsec:n_2}

\noindent For $n=2$, which is also a special case of the family of schemes constructed in \cite{MR2805717,MR2855428}, we obtain, via the construction in section~\ref{sec:DD}, the regular columns of $\mathbf{P}$ as shifts of the regular mask 
\[[\mathbf{p}(k)\ : \ k=-3,\dots,3]=\frac{1}{16}\left[\begin{array}{ccccccc}-1&0&9&16&9&0&-1\end{array}\right]\] 
and the five irregular columns of $\mathbf{P}$ are given by
\[[\mathbf{P}(m,k)]_{\substack{-7\leq m\leq7 \\k\in\mathcal{I}_{irr}}}\;=\;{\scriptsize\begin{bmatrix}
                   -1/16&                              &                    &                          &                        \\ 
                       0&                              &                    &                          &                        \\ 
                    9/16&                          -1/16&                    &                          &                        \\ 
                       1&                              0&                    &                          &                        \\ 
                    9/16&                           9/16&                -1/16&                          &                        \\ 
                       0&                              1&                    0&                         &                        \\ 
 -\frac{2h + 1}{16(h + 2)}&      \frac{3(2h + 1)}{8(h + 1)}& \frac{3(2h + 1)}{16h}& -\frac{3}{8h(h+1)(h+2)}&                       \\ 
                       &                              0&                    1&                          0&                       \\ 
                       & -\frac{3h^3}{8(2h + 1)(h + 1)}&       \frac{3(h+2)}{16}&    \frac{3(h + 2)}{8(h + 1)}& -\frac{h + 2}{16(2h + 1)} \\ 
                       &                              &                    0&                          1&                       0 \\ 
                       &                              &                -1/16&                       9/16&                    9/16 \\ 
                       &                              &                    &                          0&                       1 \\ 
                       &                              &                    &                      -1/16&                    9/16 \\ 
                       &                              &                    &                          &                       0 \\ 
                       &                              &                    &                          &                   -1/16
\end{bmatrix}}.
\]
Proposition~\ref{cor:regDDframe} with 
$[\mathbf{d}(k)\ : \ k=-3,\dots,0]=\frac{1}{4}\left[\begin{array}{cccc}1+\sqrt{3}&3+\sqrt{3}&3-\sqrt{3}&1-\sqrt{3}\end{array}\right]$,  yields
\[[\mathbf{q}_1(k)\ : \ k=-3,\dots,3] \;=\; \frac{\sqrt{2}}{16}\;\begin{bmatrix} \sqrt{3}-2& 0& 6-\sqrt{3}&   0& -6-\sqrt{3}& 0& \sqrt{3}+2\end{bmatrix},\]
\[[\mathbf{q}_2(k)\ : \ k=-3,\dots,3] \;=\; \frac{1}{16}\;\begin{bmatrix}   1& 0& -9& 16&    -9& 0&   1\end{bmatrix}.\]
Applying \cite{MR1302255} in the regular part of the mesh, we obtain $\mathbf{m}_0(k)=1$, $k<-2$, and
$\mathbf{m}_0(k)=\sqrt{h}$, $k>2$, and, by Lemma~\ref{lem:Lemma2} and Theorem~\ref{thm:semireg_prod_mat} 
applied to the corresponding basic limit functions, we get
\begin{eqnarray*} 
\mathbf{m}_0(-2)&=& \sqrt{\frac{1}{120}\left(h-\frac{3}{2}\right)^2+\frac{479}{480}},   \quad
\mathbf{m}_0(-1)= \sqrt{\frac{(7-2h)(h+2)}{15}}, \quad 
\mathbf{m}_0(0)= \sqrt{\frac{(h + 1)^3}{8h}}, \\   
\mathbf{m}_0(1)&=& \sqrt{\frac{(7h - 2)(2h + 1)}{15h}} \quad \hbox{and} \quad
\mathbf{m}_0(2)=  \sqrt{\frac{122}{120h}\left(h-\frac{3}{244}\right)^2+\frac{479}{58560h}}.
\end{eqnarray*}
The expressions for $\mathbf{m}_0(-1)$ and $\mathbf{m}_0(1)$ imply that $\mathbf{D}$  in \eqref{eq:D_mat} is positive definite 
if and only if $h\in\left(\frac{2}{7},\frac{7}{2}\right)$. 
Thus, the frame construction in Section~\ref{sec:DD} is not valid for other $h$. Moreover, for the second moment we have
$$
 \mathbf{m}_1(k)=k, \quad k<-2, \quad \hbox{and}\quad \mathbf{m}_1(k)=k\sqrt{h}, \quad k>2,
$$
and in the irregular part
\begin{equation*} \label{eq:1st_mom_4pt}
\begin{array}{l}
[\mathbf{m}_1(k)\ : \ k=-2,\dots,2]\;= \\ \\
		\qquad\diag([\mathbf{m}_0(k)\ : \ k=-2,\dots,2])^{-1}\;
		\left[\;\frac{h^3-3h^2+7h-1205}{600};\;
            -\frac{(h+2)(4h^2-14h+35)}{75};\; \dots\right. \\ \\
            \qquad\left.\dots\;\frac{(h + 1)(h-1)(31h^2+40h+31)}{600h};\;
            \frac{(2h + 1)(35h^2-14h+4)}{75h};\;
            \frac{1205h^3-7h^2+3h-1}{600h}\right].
\end{array}
\end{equation*}
Next, we construct $\mathbf{S}_{irr}$ to check the validity of Conjecture~\ref{conjecture}. The entries of $\mathbf{S}_{irr}$ depend in
an intricate way on the parameter $h$, thus, we work with $\widetilde{\mathbf{S}}_{irr}$ instead, where,  
for
\[\alpha\;=\;\frac{5(h+1)}{2},\qquad\beta\;=\;\frac{37(h^2-1)}{12},\qquad\gamma\;=\;\frac{5(h+1)^3(431h^2+938h+431)}{288},\]
we have
\[\mathbf{S}_{irr}\;=\;\frac{1}{\alpha\gamma}\diag([\mathbf{m}_0(k)\ : \ k=-2,\dots,2])\;\widetilde{\mathbf{S}}_{irr}\;\diag([\mathbf{m}_0(k)\ : \ k=-2,\dots,2]).\]
with\\ $ $\\
\verb+ +{\footnotesize
$\widetilde{\mathbf{S}}_{irr}\;=\;(\;\alpha\;\beta^2\;+\;\gamma\;)\;[(\mathbf{1}\mathbf{1}^T)(m,k)]_{-2\leq m,k\leq2}\;+\;\alpha^3\;[(\mathbf{t}\mathbf{t}^T)(m,k)]_{-2\leq m,k\leq2}\;-\;\alpha^2\beta\;\left[\big(\mathbf{1}\mathbf{t}^T+\mathbf{t}\mathbf{1}^T\big)(m,k)\right]_{-2\leq m,k\leq2}$ \\ $ $ \\
\verb+  +$=\;\frac{25(h + 1)^3}{48}\;$}
{\scriptsize$\left[\begin{array}{ccccc} 60\,h^2+88\,h+32 & 60\,h^2+51\,h+9 & 60\,h^2+14\,h-14 & 23\,h^2-9\,h-14 & -14\,h^2-32\,h-14\\ 60\,h^2+51\,h+9 & 60\,h^2+14\,h+16 & 60\,h^2-23\,h+23 & 23\,h^2-16\,h+23 & -14\,h^2-9\,h+23\\ 60\,h^2+14\,h-14 & 60\,h^2-23\,h+23 & 60\,h^2-60\,h+60 & 23\,h^2-23\,h+60 & -14\,h^2+14\,h+60\\ 23\,h^2-9\,h-14 & 23\,h^2-16\,h+23 & 23\,h^2-23\,h+60 & 16\,h^2+14\,h+60 & 9\,h^2+51\,h+60\\ -14\,h^2-32\,h-14 & -14\,h^2-9\,h+23 & -14\,h^2+14\,h+60 & 9\,h^2+51\,h+60 & 32\,h^2+88\,h+60 \end{array}\right] $.}\\ $ $\\
Note that part $(ii)$  of Conjecture~\ref{conjecture} is equivalent to the system 
\[\left\{\begin{array}{rcl}
\widetilde{\mathbf{S}}_{irr}\;\diag([\mathbf{m}_0(k)\ : \ k=-2,\dots,2])\;[\mathbf{m}_0(k)\ : \ k=-2,\dots,2]&=&\alpha\;\gamma\;[\mathbf{1}(k)\ : \ k=-2,\dots,2] \\ \\
\widetilde{\mathbf{S}}_{irr}\;\diag([\mathbf{m}_0(k)\ : \ k=-2,\dots,2])\;[\mathbf{m}_1(k)\ : \ k=-2,\dots,2]&=&\alpha\;\gamma\;[\mathbf{t}(k)\ : \ k=-2,\dots,2]
\end{array}\right.\]
of polynomial equations, whose validity we checked with the help of MATLAB symbolic tool. Due to $\alpha,\gamma>0$ for $h\in\left(\frac{2}{7},\frac{7}{2}\right)$, part $(i)$ of Conjecture~\ref{conjecture}
is equivalent to checking that
\[
 \widetilde{\mathbf{R}}_{irr}\;=\;\alpha\;\gamma\;h\;\diag(\mathbf{m}_0)^{-1}\;\mathbf{R}_{irr}\;\diag(\mathbf{m}_0)^{-1}
\]
is positive semi-definite. The renormalization leads to $\widetilde{\mathbf{R}}_{irr}$ with polynomial entries and allows for
symbolic manipulations. Indeed, this way, the generalized Sylvester criterion, confirms that $\widetilde{\mathbf{R}}_{irr}$ is
positive semi-definite for $h\in\left(\frac{2}{7},\frac{7}{2}\right)$. In Figure \ref{fig:4pt} one can see the framelets corrisponding to a possible factorization of $\mathbf{R}$ with $h=2$.

\begin{rem}
The value $2/7 \approx 0.2857$ resembles the corresponding critical value in \cite{MR2954376,MR2832719} computed for the irregular knot insertion for the $4$-point scheme. Below this critical value the scheme loses regularity. This fact makes the restriction on the range of the stepsize $h$ less surprising in this case. 
\end{rem}

\section{Conclusions}

We presented a method for constructing wavelet tight frames associated with the Dubuc-Deslauriers family of semi-regular interpolatory subdivision schemes, from their convergence analysis, to the choice of a suitable approximation of the corresponding Gramian matrix. This is the first step towards developing a practical tool for regularity analysis of wide classes of semi-regular subdivision schemes. 
There are several prominent tools for regularity analysis in the regular setting, e.g. difference operator \cite{MR1079033,MR2008967}, joint spectral radius \cite{MR3606458,MR1142737} and wavelet techniques \cite{MR1162107,MR1228209}. In the irregular setting, the available methods for regularity analysis of subdivision schemes are either based on difference operator techniques in the interpolatory case \cite{MR1687779}, or, for semi-regular schemes, on local eigenvalue analysis technique from \cite{MR2415757,Warren:2001:SMG:580358}. The next step in this direction will be to extend standard wavelet techniques to the wavelet tight frames constructed here for the analysis of wide classes of semi-regular subdivision schemes. The simplicity of our construction may also have a strength in other practical applications.

\newpage
\begin{figure}[htbp]
\centering
\includegraphics[width=\textwidth]{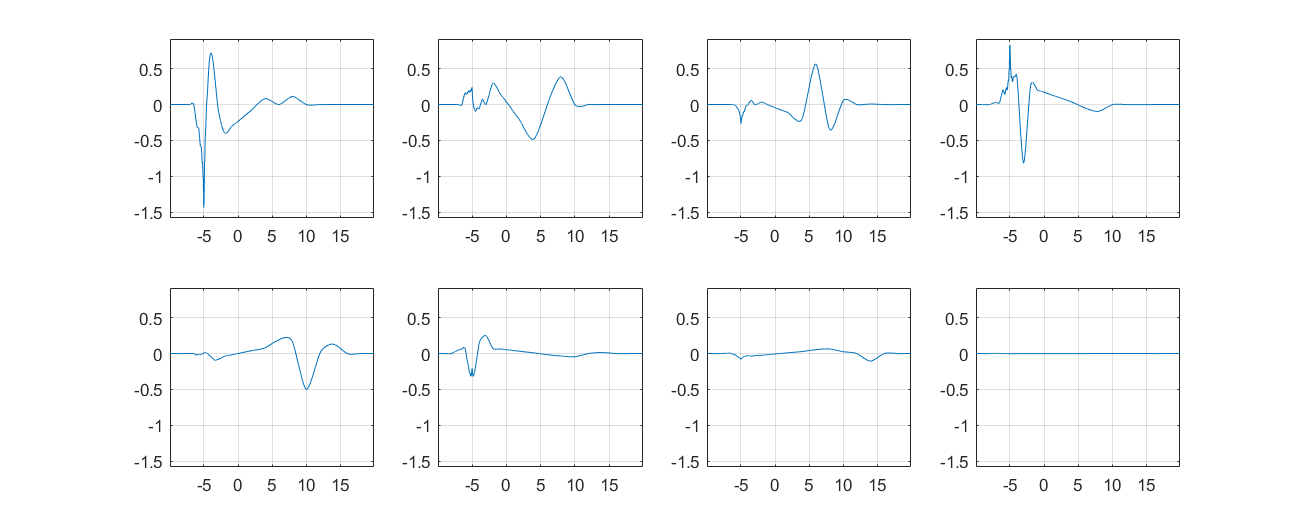}
\[\mathbf{Q}_{irr}\;=\;   
   {\scriptsize\left[\begin{array}{rrrrrrrr}
    0.0000&    0.0000&   -0.0000&    0.0311&    0.0009&    0.0481&   -0.0048&   -0.0037\\
   -0.0000&   -0.0000&   -0.0000&    0.0000&    0.0000&    0.0000&    0.0000&   -0.0000\\
   -0.0000&   -0.0000&    0.0000&   -0.2085&    0.0029&   -0.5363&    0.0452&    0.0005\\
    0.7656&    0.2646&   -0.1091&    0.3036&   -0.0790&    0.0978&    0.0404&    0.0008\\
   -0.0000&   -0.0000&    0.0000&   -0.8142&   -0.0832&    0.2487&    0.0301&    0.0007\\
   -0.4246&    0.2412&   -0.0333&    0.2374&   -0.0390&    0.0757&    0.0231&    0.0007\\
   -0.2950&    0.1101&    0.0041&    0.1827&   -0.0170&    0.0578&    0.0130&    0.0005\\
   -0.3055&    0.0064&    0.0552&    0.2222&    0.0012&    0.0696&    0.0076&    0.0007\\
   -0.0647&   -0.3271&    0.1672&    0.1482&    0.0578&    0.0446&   -0.0161&    0.0005\\
    0.2038&   -0.6632&    0.2752&    0.0547&    0.1147&    0.0134&   -0.0406&    0.0003\\
   -0.0000&    0.0000&   -0.7972&   -0.0663&    0.2687&   -0.0219&   -0.0766&    0.0001\\
    0.0863&    0.5593&    0.4887&   -0.1329&    0.2274&   -0.0491&   -0.0893&   -0.0001\\
   -0.0000&    0.0000&   -0.0765&   -0.0034&   -0.7045&   -0.0610&   -0.0364&   -0.0003\\
    0.0000&    0.0000&     0.0000&    0.0000&    0.0000&    0.0000&    0.0000&    0.0000\\
   -0.0000&    0.0000&   -0.0109&   -0.0015&    0.1847&    0.0201&    0.1492&   -0.0008\\
\end{array}\right]}\]
\caption{Irregular framelets for the Dubuc-Deslauriers $4$-point scheme with $h=2$ corresponding, from left to right, to the columns of a possible factorization of $\mathbf{R}_{irr}\;=\;\mathbf{Q}_{irr}\;\mathbf{Q}_{irr}^T$.}
\label{fig:4pt}
\end{figure}

\section*{Acknowledgements}

This research has been accomplished within RITA (Research ITalian network on Approximation). The author would also like to thank M. Charina, C. Conti, L. Romani and J. St\"ockler for the fruitful discussions and suggestions and the anonymous reviewers for the comments which improved this work considerably.

\section*{References}

\bibliographystyle{siam}
\bibliography{SMART.bbl}

\end{document}